\def\draw #1 by #2 (#3){
	\vbox to #2{
		\hrule width #1 height 0pt depth 0pt
		\vfill
		\special{picture #3} 
	}
}
\def\scaleddraw #1 by #2 (#3 scaled #4){{
		\dimen0=#1 \dimen1=#2
		\divide\dimen0 by 1000 \multiply\dimen0 by #4
		\divide\dimen1 by 1000 \multiply\dimen1 by #4
		\draw \dimen0 by \dimen1 (#3 scaled #4)}
}
\newtheorem{theorem}{Theorem}[section]
\newtheorem{example}[theorem]{Example}
\newtheorem{problem}[theorem]{Problem}
\newtheorem{defin}[theorem]{Definition}
\newtheorem{lemma}[theorem]{Lemma}
\newtheorem{remark}[theorem]{Remark}
\newtheorem{nt}{Note}
\newcommand{\singlespacing}{\let\CS=\@currsize\renewcommand{\baselinestretch}{1}\tiny\CS}
\newcommand{\oneandahalfspacing}{\let\CS=\@currsize\renewcommand{\baselinestretch}{1.25}\tiny\CS}
\newcommand{\doublespacing}{\let\CS=\@currsize\renewcommand{\baselinestretch}{1.35}\tiny\CS}
\newtheorem{rule-def}[theorem]{Rule}
\begin{document}
\numberwithin{equation}{section}.
\numberwithin{table}{section}
\numberwithin{figure}{section}
	\baselineskip 16pt
	\newcommand{\la}{\lambda}
	\newcommand{\si}{\sigma}
	\newcommand{\ol}{1-\lambda}
	\newcommand{\be}{\begin{equation}}
		\newcommand{\ee}{\end{equation}}
	\newcommand{\bea}{\begin{eqnarray}}
		\newcommand{\eea}{\end{eqnarray}}

	\baselineskip=0.30in
		\baselineskip=0.30in
         \textbf{\Large{On extremal graphs with respect to the $\mathcal{ABS}$ index}}
	\vspace*{0.3cm} 
\begin{center}
Swathi Shetty$^{1}$, B. R. Rakshith$^{*,2}$, Sayinath Udupa N. V. $^{3}$\\
Department of Mathematics, Manipal Institute of Technology\\ Manipal Academy of Higher Education\\ Manipal, India -- 576104\\
swathi.dscmpl2022@learner.manipal.edu$^{1}$\\
ranmsc08@yahoo.co.in$^{*,2}$\\
sayinath.udupa@manipal.edu	$^{3}$.
\end{center}
\[\text{July 23, 2025}^\dagger\]
\footnotetext{*Corresponding author; $\dagger$ Initial journal submission date}
\begin{abstract}       Recently, Ali et al. \cite{ali2024extremal} posed several open problems concerning extremal graphs with respect to the $\mathcal{ABS}$ index. These problems involve characterizing graphs that attain the maximum $\mathcal{ABS}$ index within specific graph classes, including: (i) connected graphs with \( n \) vertices and \( p \) cut-vertices; (ii) connected graphs of order \( n \) with vertex $k$-partiteness \( v_k(G) = r \); and (iii) connected bipartite graphs of order \( n \) with a fixed vertex connectivity \( \kappa \). In this paper, we provide complete solutions to all of these problems.   \end{abstract}
        \noindent
        {\bf AMS Classification:}  05C09, 05C35, 05C92.
        \\
        \noindent
        {\bf Key Words:} $\mathcal{ABS}$ index, vertex connectivity, vertex $k$-partiteness.
        \section{Introduction}
        Throughout the paper, $G$ stands for a graph on $n$ vertices without any loops and multiple-edges. As usual, we denote the vertex set and the edge set of $G$ by $V(G)$ and $E(G)$, respectively. Two adjacent vertices $v_{i}$ and $v_{j}$ are represented as $v_{i}\sim v_{j}$, or $i\sim j$. The degree of a vertex $v_i$ in $G$ is denoted as $d_G(v_i)/ d(v_i).$ The vertex connectivity $\kappa$ of $G$ is the cardinality of a minimum vertex cut set. The vertex $k$-partiteness  of $G$, $v_{k}(G)$ is the minimum number of vertices whose removal results in a $k$-partite graph.   Topological indices, a well-known class of graph invariants, are numerical values derived from the structure of a graph. In molecular graph theory, these indices are widely used as they often reflect the physicochemical properties of the corresponding molecules. For some recent studies on topological indices, see \cite{das2025vertex,gao2025extremal}.  The atom-bond sum connectivity index, commonly known as the $\mathcal{ABS}$ index, is a recently introduced topological invariant that has quickly attracted significant attention. It is defined as  \begin{center}$\mathcal{ABS}(G)=\sum\limits_{uv\in E(G)} \sqrt{1-\dfrac{2}{d(u)+d(v)}}$.\end{center} The chemical relevance of the $\mathcal{ABS}$ index is explored in \cite{ali2023atom}, while its extremal values and bounds are investigated in \cite{li2024greatest,nithya2023smallest}. 

Recently, in the review paper \cite{ali2024extremal}, Ali et al. listed known bounds and extremal results related to the $\mathcal{ABS}$ index. They also stated several new extremal results that follow easily from existing general findings. Furthermore, the authors posed several open problems concerning extremal graphs with respect to the $\mathcal{ABS}$ index. These problems involve characterizing graphs that attain the maximum $\mathcal{ABS}$ index within specific graph classes, including: (i) connected graphs with \( n \) vertices and \( p \) cut-vertices; (ii) connected graphs of order \( n \) with vertex $k$-partiteness \( v_k(G) = r \); and (iii) connected bipartite graphs of order \( n \) with a fixed vertex connectivity \( \kappa \). Motivated by these works, in this paper, we provide complete solutions to all of these problems.
      \section{$\mathcal{ABS}$ index of graphs with the given number of cut-vertices}\label{given1}
Let $\mathbb{V}_{n}^p$ be the collection of all  connected graphs of order $n$ with $p$ number of cut-vertices. We need the following theorem to prove our results in this section.
\begin{theorem}~\rm\cite{wu2017monotonicity,ali2024extremal}\label{clique theorem}
If $G$ is a graph attaining the maximum $\mathcal{ABS}$ index in $\mathbb{V}_{n}^p$, then every block of $G$ is a clique and every cut-vertex of $G$ is contained in exactly two blocks.
\end{theorem}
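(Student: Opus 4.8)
The plan is to prove both assertions by contradiction, in each case exhibiting an edge-addition operation that keeps the graph inside $\mathbb{V}_{n}^p$ while strictly increasing the index. The only analytic input is elementary: writing $f(x,y)=\sqrt{1-2/(x+y)}$, this function is strictly increasing in the sum $x+y$ and strictly positive as soon as $x+y\ge 3$. Since a graph with a cut-vertex has $n\ge 3$ (and for $n\le 2$ the statement is trivial, $G$ having a single block and no cut-vertex), every edge of $G$ — and of every supergraph we build from it — has endpoint-degree-sum at least $3$. Consequently, inserting a new edge strictly raises $\mathcal{ABS}$: the new edge contributes a positive summand, every summand at an endpoint of the new edge strictly increases, and no summand decreases. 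This ``edge insertion helps'' principle is the engine of the argument; the substantive work is only in checking that each operation preserves the number of cut-vertices.

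For the first claim, suppose some block $B$ of $G$ is not a clique. Then $B$ is $2$-connected on at least three vertices, so it contains two non-adjacent vertices $u,v$, and I would take $G'=G+uv$. Adding an edge inside a $2$-connected block leaves it $2$-connected, so $B+uv$ is still a single block and the block–cut structure of $G'$ is exactly that of $G$ with $B$ merely enlarged; in particular the set of vertices lying in at least two blocks — the cut-vertices — is unchanged, so $G'\in\mathbb{V}_{n}^p$. But $\mathcal{ABS}(G')>\mathcal{ABS}(G)$, contradicting maximality. Hence every block of $G$ is a clique.

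For the second claim, suppose a cut-vertex $w$ lies in blocks $B_1,B_2,\dots,B_t$ with $t\ge 3$; by the first part each $B_i$ is a clique. Write $B_1=\{w\}\cup A$ and $B_2=\{w\}\cup C$ with $A,C\ne\varnothing$, and let $G'$ be obtained from $G$ by adding all edges between $A$ and $C$, so that $B_1\cup B_2$ is replaced by the single clique $K_{|A|+|C|+1}$ while every other block is untouched. The point that needs care is that $G'$ has exactly the same cut-vertices as $G$: adding edges never creates a cut-vertex; $w$ itself still lies in $t-1\ge 2$ blocks of $G'$; and no other cut-vertex $z$ is destroyed, because for $z\ne w$ the set $(V(B_1)\cup V(B_2))\setminus\{z\}$ is still connected inside $G-z$ (both $B_1-z$ and $B_2-z$ are cliques-minus-a-point, each containing $w$), so the newly inserted edges, all lying within this set, cannot merge any two components of $G-z$. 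Thus $G'\in\mathbb{V}_{n}^p$, and since $|A|,|C|\ge 1$ at least one edge was added, giving $\mathcal{ABS}(G')>\mathcal{ABS}(G)$ — again a contradiction. Therefore every cut-vertex of $G$ lies in exactly two blocks.

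The routine parts (monotonicity and positivity of $f$, and the resulting strict increase of $\mathcal{ABS}$ under edge insertion) are immediate. The step I expect to be the real obstacle is confirming that the block-merging operation in the second part neither gains nor loses a cut-vertex; the argument above hinges on the fact that the two merged blocks share the vertex $w$, so that no vertex other than $w$ can separate them, and it is precisely this feature that makes the merge ``safe'' for staying inside $\mathbb{V}_{n}^p$.
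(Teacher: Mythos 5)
Your proof is correct. Note that the paper does not prove this statement at all --- it is imported verbatim from \cite{wu2017monotonicity,ali2024extremal} --- and your argument is exactly the standard one underlying those sources: the $\mathcal{ABS}$ index is strictly edge-monotone (since $t\mapsto\sqrt{1-2/t}$ is increasing and the new edge's term is positive), so one only has to verify that completing a non-clique block, and merging two clique blocks through a cut-vertex lying in at least three blocks, both preserve the cut-vertex set; your checks of these two facts (in particular that the added edges in the merge step lie inside a single component of $G-z$ for every cut-vertex $z\neq w$, and that $w$ survives because it still lies in $t-1\ge 2$ blocks) are sound.
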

\begin{lemma}\label{lemma clique}
Let $G$ be a graph that attains the maximum $\mathcal{ABS}$ index in $\mathbb{V}_{n}^p$. Then $G$ is either the path graph $P_{n}$ or it can be formed by attaching one of the end vertices of a path $P^{i}$ (with a length of at least 1) to a vertex $u_i$ of the complete graph $K_{n-p}$ (with vertex labels $u_1,u_2,\ldots,u_{n-p}$) for $1\le i \le k$, where $1\le k\le n-p$.
\end{lemma}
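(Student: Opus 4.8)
\noindent\emph{Proof idea.} The plan is to convert Theorem~\ref{clique theorem} into a block-tree picture, settle the extreme value $p=n-2$ by a counting identity, and then run a single vertex-relocation surgery to force all but one block to be an edge.

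First, by Theorem~\ref{clique theorem} every block of $G$ is a clique and every cut-vertex lies in exactly two blocks. Hence in the block--cut tree of $G$ every cut-vertex node has degree $2$; suppressing these nodes yields a tree $T$ whose nodes are the blocks $B_1,\dots,B_{p+1}$ of $G$ and whose $p$ edges correspond bijectively to the cut-vertices. Counting vertex--block incidences (each cut-vertex lies in two blocks, every other vertex in exactly one) gives $\sum_i |B_i|=n+p$. If $p=n-2$, this forces $|B_i|=2$ for all $i$, so $G$ is a tree with $n-2$ cut-vertices, hence with exactly two leaves, i.e.\ $G=P_n$. If $p\le n-3$, the same identity rules out that all blocks are $K_2$, so $G$ has at least one block of order $\ge 3$.

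The crux is to show that $G$ has \emph{at most} one block of order $\ge 3$. Suppose $B=K_a$ and $B'=K_b$ are two such blocks with $a\ge b\ge 3$, and let $w$ be a non-cut-vertex of $B'$ (arranging for a suitable large block to contain such a $w$ needs a brief case analysis on $T$ — a large block of $T$-degree smaller than its order works — and is a technical point, not the heart of the matter). I would form $G'$ by moving $w$ from $B'$ into $B$, i.e.\ replacing $B'$ by $K_{b-1}$ and $B$ by $K_{a+1}$; since $b-1\ge 2$ and $w$ is not a cut-vertex, $G'\in\mathbb{V}_n^p$. I then compare the two indices edge by edge, using that $g(s)=\sqrt{1-2/s}$ is increasing and concave, so $f(x,y)=g(x+y)$ has decreasing increments: all edges meeting $B$ (its interior edges, the pendant edges at its cut-vertices, and the $a$ new edges at $w$) stay fixed or increase, while all edges meeting $B'\setminus\{w\}$ decrease. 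The $a$ new edges at $w$ already contribute at least $a\sqrt{1-1/a}=\sqrt{a(a-1)}$, which strictly exceeds the total contribution (which is $<b-1\le a-1$) of the $b-1$ edges $w$ loses, since $\sqrt{a(a-1)}>a-1$ for $a\ge 2$. The hard part will be to show that this surplus, together with the (nonnegative) gain on the rest of the $B$-side, dominates the total decrease on the $B'$-side; for that I would bound each interior decrease by $g(2b-2)-g(2b-4)$ and each pendant decrease at a cut-vertex $c\in B'$ by $g'(d_G(c))$ times the number of $c$'s pendant edges (so that cut-vertices of large extra degree contribute negligibly), and verify the resulting numerical inequality, whose tightest instance is $a=b=3$. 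This gives $\mathcal{ABS}(G')>\mathcal{ABS}(G)$, contradicting extremality.

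Finally, for $p\le n-3$ the previous steps give exactly one block $K_m$ of order $\ge 3$ and $p$ blocks equal to $K_2$, and $\sum_i|B_i|=n+p$ forces $m=n-p$. Since each cut-vertex lies in exactly two blocks, every maximal chain of $K_2$-blocks hanging off $K_{n-p}$ has all internal vertices of degree $2$ and so is a path, and two such chains cannot meet $K_{n-p}$ at the same vertex (which would then lie in three blocks). Hence $G$ arises by attaching pendant paths $P^1,\dots,P^k$ of lengths at least $1$ and total length $p$ at distinct vertices $u_1,\dots,u_k$ of $K_{n-p}$ with $1\le k\le n-p$; together with the case $p=n-2$ (where $K_{n-p}=K_2$ and $G=P_n$), this is the asserted dichotomy.
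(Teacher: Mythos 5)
Your setup (block--cut tree, the incidence count $\sum_i|B_i|=n+p$, the resolution of $p=n-2$, and the final assembly once only one block has order $\ge 3$) is correct and cleaner than what the paper makes explicit. But the core step — ruling out two blocks of order $\ge 3$ by moving a single non-cut-vertex $w$ from $B'=K_b$ into $B=K_a$ — has a genuine gap before the inequality even starts: such a $w$ need not exist. A block $B'$ has a non-cut-vertex exactly when its $T$-degree is less than $|B'|$, and it can happen that \emph{every} block of order $\ge 3$ fails this. For instance, take two disjoint triangles $\{a_1,a_2,a_3\}$ and $\{b_1,b_2,b_3\}$, join $a_1$ to $b_1$ by an edge, and attach a pendant edge to each of $a_2,a_3,b_2,b_3$: every block is a clique, every cut-vertex lies in exactly two blocks, there are two blocks of order $3$, and all six of their vertices are cut-vertices. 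Your parenthetical fix (``a large block of $T$-degree smaller than its order works'') assumes existence rather than proving it, so your surgery simply cannot be performed on such graphs; you would need a second operation (e.g.\ relocating a cut-vertex together with its entire branch, with a separate degree analysis) to cover this case. The paper avoids the issue entirely by a different surgery: it keeps a minimum-degree vertex $y$ of $H_2$ attached by the single edge $xy$ and merges all of $H_2\setminus\{x,y\}$ into $H_1$ at once, which preserves the cut-vertex count whether or not $y$ is a cut-vertex.

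The second gap is that the central numerical inequality is only asserted. Your stated surplus at $w$, namely $\sqrt{a(a-1)}-(b-1)$, is for $a=b=3$ only about $0.449$, while the $B'$-side losses you have not yet bounded can consume most of it: if $B'=K_3$ has non-cut-vertices $w,w'$ and cut-vertex $x$, then besides $w$'s two lost edges the edge $xw'$ drops from $\sqrt{1-2/(d(x)+2)}$ to $\sqrt{1-2/d(x)}$ and every external edge at $x$ also decreases, and when $b$ is large the $\binom{b-2}{2}$ interior edges of $B'\setminus\{w\}$ each lose roughly $2g'(2b)$, summing to a constant comparable to the surplus. So the claim ``this surplus dominates the total decrease'' is exactly the part that needs a careful, uniform estimate (as the paper's Subcases 1.1 and 1.2 attempt for its own surgery), and deferring it to ``verify the resulting numerical inequality'' leaves the proof incomplete. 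In short: the architecture is sound and genuinely different from the paper's, but both the existence of the vertex you move and the inequality that justifies moving it are unproved, and the first of these fails as stated.
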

\begin{proof}
By Theorem~\ref{clique theorem}, every block of $G$ is a clique and every cut-vertex of $G$ is contained in exactly two blocks.
Let $H_{1}=K_{\alpha}$ and $H_{2}=K_{\beta}$ be the two largest cliques in $G$, where $\alpha \ge \beta \ge 2$. Assume that $\beta\ge 3$.\\[2mm]
Case 1: Suppose $H_{1}$ and $H_{2}$ share a common cut-vertex, say $x$ of $G$.   
Let $y$ be a vertex of minimum degree in $H_{2}\backslash \{x\}$, and $z$ be a vertex of $H_{2}$ that is different from the vertices $x$  and $y$. Let $G^{\prime}$ be the graph obtained from $G$ by removing all the edges of $H_{2}$ which are incident with $y$ except the edge $xy$, and then joining each vertex of $H_{2}\backslash \{x,y\}$ with every vertex in $H_{1}\backslash \{x\}$. Then $d_{G^{\prime}}(u)= d_{G}(u)+\beta-2$ for $u\in V(H_{1})\backslash\{x\}$, and $d_{G^{\prime}}(w)=d_{G}(w)+\alpha-2$ for $w\in V(H_{2})\backslash\{x,y\}$. Also, $d_{G}(x)=d_{G^{\prime}}(x)=\alpha+\beta-2$. Importantly, the cut-vertices of $G$ and $G^{\prime}$ are essentially the same. We now consider the following subcases.\\[2mm]
Subcase 1.1: The vertex $y$ is not a cut-vertex of $G$. Then $d_{G}(y)=\beta-1$ and $d_{G^{\prime}}(y)=1$. Therefore,    
$\mathcal{ABS}(G)-\mathcal{ABS}(G^{\prime})<$\\[2mm]$\displaystyle\sum_{w: w\in V(H_{2})\backslash \{y\}}\sqrt{1-\dfrac{2}{d_{G}(w)+d_{G}(y)}}-\sum_{u:u\in V(H_{1})\backslash\{x\}}\,\,\sum_{w: w\in V(H_{2})\backslash\{x,y\}}\sqrt{1-\dfrac{2}{d_{G^{\prime}}(u)+d_{G^{\prime}}(w)}}$.\\[2mm]
Since $d_{G}^{\prime}(u)+d_{G^{\prime}}(w)> d_{G}(u)+d_{G}(y)$  and $|V(H_{1})|\ge 3$, $\mathcal{ABS}(G)<\mathcal{ABS}(G^{\prime})$, a contradiction.\\[2mm]
Subcase 1.2: The vertex $y$ is a cut-vertex of $G$. Then $d_{G}(y)\le 2\beta-2$ and $d_{G^{\prime}}(y)\le\beta$. Let $u_{1}$ ($\neq x$) be a vertex in $H_{1}$. If |$V(H_{2})|\ge 4$, then $V(H_{2})\backslash\{x,y,z\}\neq\emptyset$. Let $w_{1}$ be a vertex in $V(H_{2})\backslash\{x,y,z\}$. Then $\mathcal{ABS}(G)-\mathcal{ABS}(G^{\prime})<$
\begin{align*}&\displaystyle\sum_{w: w\in V(H_{2})\backslash \{y\}}\sqrt{1-\dfrac{2}{d_{G}(w)+d_{G}(y)}}
-\sum_{w: w\in V(H_{2})\backslash\{x,y\}}\sqrt{1-\dfrac{2}{d_{G^{\prime}}(w)+d_{G^{\prime}}(u_{1})}}\\[2mm]&+\sum_{\underset{ y^{\prime}\notin  V(H_{2})}{y^{\prime}: y^{\prime}\sim y} }\sqrt{1-\dfrac{2}{d_{G}(y^{\prime})+d_{G}(y)}}-\sum_{u: u\in V(H_{1})\backslash\{x,u_{1}\}}\sqrt{1-\dfrac{2}{d_{G^{\prime}}(u)+d_{G^{\prime}}(z)}}\\[2mm]&-\sum_{u: u\in V(H_{1})\backslash\{x,u_{1}\}}\sqrt{1-\dfrac{2}{d_{G^{\prime}}(u)+d_{G^{\prime}}(w_{1})}}<0.\end{align*}
Thus, $\mathcal{ABS}(G)<\mathcal{ABS}(G^{\prime})$, a contradiction. Otherwise $|V(H_{2})|=3$. In this case,
$\mathcal{ABS}(G)-\mathcal{ABS}(G^{\prime})<$\\
\begin{align}\label{cuteq1}\allowdisplaybreaks&\displaystyle\sum_{w:w\in\{x,z\}}\sqrt{1-\dfrac{2}{d_{G}(w)+d_{G}(y)}}-\sum_{u:u\in V(H_{1})\backslash\{x\}}\sqrt{1-\dfrac{2}{d_{G^{\prime}}(u)+d_{G^{\prime}}(z)}}\notag\\[2mm]&+
\sum_{\underset{ y^{\prime}\notin  V(H_{2})}{y^{\prime}: y^{\prime}\sim y} }\sqrt{1-\dfrac{2}{d_{G}(y^{\prime})+d_{G}(y)}}-\sum_{\underset{ y^{\prime}\notin  V(H_{2})}{y^{\prime}: y^{\prime}\sim y} }\sqrt{1-\dfrac{2}{d_{G^{\prime}}(y^{\prime})+d_{G^{\prime}}(y)}}\notag\\[2mm]
&-\sqrt{1-\dfrac{2}{d_{G^{\prime}}(y^{\prime})+d_{G^{\prime}}(x)}}.
\end{align}
The sum of the first two terms in (\ref{cuteq1}) is not positive, and it is easy to verify that the sum of the last three terms is negative. Thus, $\mathcal{ABS}(G)<\mathcal{ABS}(G^{\prime})$, again a contradiction.\\[2mm] 
Case 2: Suppose $H_{1}$ and $H_{2}$ do not share a common cut-vertex. Let $x$ be a cut-vertex of $G$ in $H_{2}$, and let $y$ be a vertex of minimum degree in $H_{2}\backslash \{x\}$. Let $G^{\prime}$ be the graph obtained from $G$ by removing all the edges of $H_{2}$ that are incident with either $x$ or $y$, except the edge $xy$, and then joining each vertex of $H_{2}\backslash \{x,y\}$ with every vertex in $H_{1}$. Then the cut-vertices of $G$ and $G^{\prime}$ are essentially the same, and by a similar argument to that of Case 1, we end up with a contradiction.\\[2mm] 
Thus, $\beta=2$. Since every block in $G$ is a clique, $G$ is either a path graph $P_{n}$ or it can be formed by attaching one of the end vertices of a path $P^{i}$ (with a length of at least 1) to a vertex $u_i$ of the complete graph $K_{n-p}$ (with vertex labels $u_1,u_2,\ldots,u_{n-p}$) for $1\le i \le k$, where $1\le k\le n-p$.
\end{proof}
\noindent
Graphs $\mathbb{K}_{n}^{p}$: Let $n\ge 3$ and $0\le p\le n-2$. If $n\ge 2p$, then  $\mathbb{K}_n^p$ is obtained by attaching $p$ pendant vertices to $p$ vertices of the complete graph $K_{n-p}$. Otherwise ($n<2p$), the graph  $\mathbb{K}_n^p$ is formed by attaching $n-p-1$ pendant vertices to $n-p-1$ vertices of $K_{n-p}$, and then a path of length $2p-n+1$ is attached to the remaining one vertex of $K_{n-p}$.
\begin{figure}[H]
\includegraphics[width=1\textwidth]{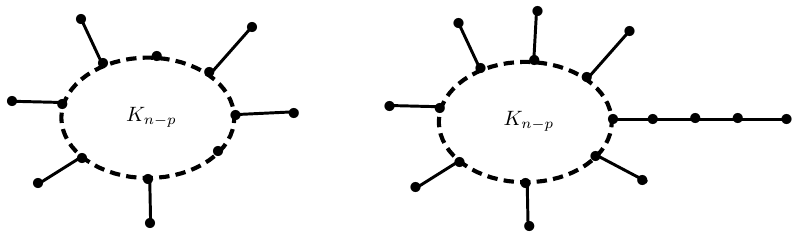}
\label{fig11}
\caption{Graphs $\mathbb{K}_{n}^{p}$, for $n\ge 2p$ (left) and $n<2p$ (right).}
\end{figure}
 \begin{theorem}
 Let $G\in \mathbb{V}_n^p.$ Then $\mathcal{ABS}(G)\le\mathcal{ABS}(\mathbb{K}_{n}^{p})$, 
where the equality holds if and only if $G\cong \mathbb{K}_n^p.$
 \end{theorem}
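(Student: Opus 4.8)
The plan is to build on Lemma~\ref{lemma clique}. Fix a graph $G$ of maximum $\mathcal{ABS}$ index in $\mathbb{V}_n^p$; by that lemma $G$ is $K_{n-p}$ with pendant paths $P^1,\dots,P^k$ of lengths $\ell_1\ge\cdots\ge\ell_k\ge1$ attached at distinct clique vertices. Since the cut-vertices of $G$ are precisely the $k$ attachment vertices together with the $\sum_i(\ell_i-1)$ internal path vertices, we get $\sum_i\ell_i=p$, hence $1\le k\le\min\{p,n-p\}$ (the bound $k\le p$ being forced by $\sum_i\ell_i=p$). Moreover $\mathbb{K}_n^p$ is exactly the member of this family with $k=\min\{p,n-p\}$ whose length vector is $(1,1,\dots,1)$ when $p\le n-p$ and $(2p-n+1,1,\dots,1)$ when $p>n-p$. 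The extreme cases are immediate: if $p=0$ then $\mathbb{K}_n^0=K_n$, which maximizes $\mathcal{ABS}$ even among all $n$-vertex graphs; if $p=n-2$ then $\mathbb{V}_n^{n-2}=\{P_n\}=\{\mathbb{K}_n^{n-2}\}$. So from now on $1\le p\le n-3$, i.e.\ $m:=n-p\ge3$.

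Write $\mathcal{ABS}(G)$ as the sum of the $\binom{n-p}{2}$ clique-edge terms plus the path-edge total $\sum_i f(\ell_i)$, where $f(\ell)$ is the sum of the edge terms of a pendant path of length $\ell$ hung on a clique vertex (which has degree $m$). A direct computation gives $f(1)=\sqrt{\tfrac{m-1}{m+1}}$ and $f(\ell)=\sqrt{\tfrac{m}{m+2}}+\tfrac{\ell-2}{\sqrt2}+\tfrac1{\sqrt3}$ for $\ell\ge2$, so $f(\ell)-f(\ell-1)=\tfrac1{\sqrt2}$ for all $\ell\ge3$, while a short estimate gives $\tfrac1{\sqrt3}\le f(2)-f(1)<\tfrac1{\sqrt2}$ and $f(1)\ge\tfrac1{\sqrt2}$ for every $m\ge3$; these inequalities are the only place where anything needs checking numerically.

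Next I introduce two local moves, each keeping the graph of the above form and preserving $n$ and the number of cut-vertices $p$ (so they map $\mathbb{V}_n^p$ into itself). \emph{Move A} (available when $k<\min\{p,n-p\}$, so that some $\ell_j\ge2$ and some clique vertex is path-free): detach the terminal vertex of $P^j$ with its edge and re-attach it as a pendant at a path-free clique vertex. That clique vertex rises from degree $m-1$ to $m$, so each of its $m-1\ (\ge2)$ incident clique terms strictly increases, while the path terms change by $f(\ell_j-1)+f(1)-f(\ell_j)= f(1)-\bigl(f(\ell_j)-f(\ell_j-1)\bigr)\ge f(1)-\tfrac1{\sqrt2}\ge0$; hence Move A strictly increases $\mathcal{ABS}$. \emph{Move B} (available when at least two paths have length $\ge2$): transfer the terminal vertex of a shortest path $P^j$ of length $\ge2$ onto the end of a longest path. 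No clique term changes and the path terms change by $\tfrac1{\sqrt2}-\bigl(f(\ell_j)-f(\ell_j-1)\bigr)$, which is $0$ if $\ell_j\ge3$ and $>0$ if $\ell_j=2$; so Move B never decreases $\mathcal{ABS}$ and increases it strictly exactly when it shortens a length-$2$ path.

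Finally, suppose $G\not\cong\mathbb{K}_n^p$. Starting from $G$, repeatedly apply Move A while $k<\min\{p,n-p\}$ (possible, since otherwise all $\ell_i=1$ and $\sum_i\ell_i=k<p$, a contradiction); this terminates with $k=\min\{p,n-p\}$. If two or more paths then have length $\ge2$ — which happens precisely when $\min\{p,n-p\}=n-p$ and the current length vector is not $(2p-n+1,1,\dots,1)$ — keep applying Move B until only one path has length $\ge2$; in every case the graph reached is $\mathbb{K}_n^p$. Since $G\not\cong\mathbb{K}_n^p$, the process was nonempty: it began either with a (strict) Move A, or, if $G$ already had $k=\min\{p,n-p\}$, with a nonempty run of Move B's that must contain a strict length-$2$ transfer, because the number of length-$\ge2$ paths can only drop through such a transfer. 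In both cases $\mathcal{ABS}(\mathbb{K}_n^p)>\mathcal{ABS}(G)$, contradicting the maximality of $G$. Hence every maximizer is isomorphic to $\mathbb{K}_n^p$, which gives $\mathcal{ABS}(G)\le\mathcal{ABS}(\mathbb{K}_n^p)$ for all $G\in\mathbb{V}_n^p$ with equality iff $G\cong\mathbb{K}_n^p$. I expect the main obstacle to be the boundary case $m=3$ of Move A, where $f(1)=\tfrac1{\sqrt2}$ and the path-term change can be exactly $0$, so that strictness hinges entirely on the clique-edge gain; keeping track of this is also what isolates the equality case.
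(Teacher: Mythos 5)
Your proof is correct and follows essentially the same route as the paper: after invoking Lemma~\ref{lemma clique}, your Move A (relocating a terminal path vertex to a path-free clique vertex) and Move B (consolidating excess length onto a single path) are exactly the two local transformations the paper uses to reach $\mathbb{K}_n^p$. Your write-up is if anything more careful about termination and the equality case (in particular the boundary $m=3$, where the path-term change in Move A vanishes and strictness comes from the clique edges), but the underlying argument is the same.
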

 \begin{proof}
Suppose $G$ has maximum $\mathcal{ABS}$ index. If $G\cong P_{n}$, then there is nothing to prove. Otherwise, $G\ncong P_{n}$, and by Lemma \ref{lemma clique}, $G$ can be constructed by attaching one of the end vertices of a path $P^{i}$ (with a length of at least 1) to a vertex $u_i$ of the complete graph $K_{n-p}$ (with vertex labels $u_1,u_2,\ldots,u_{n-p}$) for $1\le i \le k$, where $1\le k\le n-p$ and $1\le p\le n-3$. Assume that the length of $P^{1}$ is at least  2 and  $k< n-p$. Consider $G^{\prime}$, the graph obtained from $G$ by deleting the pendant vertex of the path $P^{1}$ ($w_0(u_{1})-w_{1}-w_{2}-\cdots-w_{\ell},\, \ell\ge 2$), and then adding a pendant edge to the vertex $u_{n-p}$.\\[2mm] 
Now,
$\mathcal{ABS}(G)-\mathcal{ABS}(G^{\prime})= \dfrac{1}{\sqrt{3}}+\sqrt{1-\dfrac{2}{d_{G}(w_{\ell-2})+2}}-\sqrt{1-\dfrac{2}{d_{G^{\prime}}(w_{\ell-2})+1}}\\[2mm]-\sqrt{1-\dfrac{2}{n-p+1}}+(n-p-1)\left(\sqrt{1-\dfrac{2}{2(n-p-1)}}-\sqrt{1-\dfrac{2}{2(n-p)-1}}\right)\\[2mm]< \dfrac{1}{\sqrt{2}}-\sqrt{1-\dfrac{2}{n-p+1}}\le 0.$\\[2mm]
Thus, $ABS(G)<ABS(G^{\prime})$, a contradiction. Therefore, length of $P^{i}$ is 1 for $1\le i\le k<n-p$, or $k=n-p$.\\[2mm] 
 If the length of $P^{i}$ is 1 for $1\le i\le k<n-p$, then we are done. Otherwise, $k=n-p$. Without loss of generality, we can assume that $P^{1}$ is of maximum length among all paths $P^{i}$. Let $G^{\prime\prime}$ be the graph obtained from $G$ by removing the pendant vertex in $P^{i}$ ($i\neq1$), and then adding a pendant edge to the pendant vertex in $P^{1}$. Then  
$\mathcal{ABS}(G)=\mathcal{ABS}(G^{\prime\prime})$ when the length of paths $P^{1}$ and $P^{i}$ are at least 3, and if the length of $P^{i}$ is 2 and the length of  $P^{1}$ is at least $2$, then   
$\mathcal{ABS}(G)-\mathcal{ABS}(G^{\prime\prime})=\sqrt{1-\dfrac{2}{n-p+2}}-\sqrt{1-\dfrac{2}{n-p+1}}+\dfrac{1}{\sqrt{3}}-\dfrac{1}{\sqrt{2}}$$\le \sqrt{\dfrac{3}{5}}-\sqrt{2}+\dfrac{1}{\sqrt{3}}<0.$ That is, $\mathcal{ABS}(G)<\mathcal{ABS}(G^{\prime\prime})$. Hence the length of $P^{1}$ is at least 1 and the lengths of $P^{i}$ is 1 for $2\le i\le n-p$. Proving that $G\cong \mathbb{K}_{n}^{p}$.
 \end{proof}
 \section{$\mathcal{ABS}$ index and vertex $k$-partiteness in graphs}\label{given2}
 A complete multipartite graph with $k$-partition sets of order $n_{1}, n_{2},\ldots, n_{k}$ is denoted as $K_{n_1,n_2,\ldots,n_k}$. 
 The Tur$\acute{\text{a}}$n graph $T(n,k)$ is the complete $k$-partite graph on $n$ vertices given by $T(n,k)\cong K_{\underbrace{t,t,\dots,t}_{(k-s) }, \underbrace{t+1,t+1,\dots,t+1}_{s}}$, where $n=kt+s$ and $s\ge 0$.\\
We now show that the Tur$\acute{\text{a}}$n graph has maximum $\mathcal{ABS}$ index among all complete $k$-partite graphs.
 \begin{theorem}\label{turan}
 Let $G$ be a complete k-partite graph on n vertices. Then $\mathcal{ABS}(G)\le \mathcal{ABS}(T(n,k))$. Equality holds if and only if $G\cong T(n,k)$.
 \end{theorem}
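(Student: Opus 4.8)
The plan is to use a balancing (vertex‑shifting) argument. Among all complete $k$-partite graphs on $n$ vertices, the Tur\'an graph $T(n,k)$ is, up to isomorphism, the unique one in which any two part sizes differ by at most $1$; so it suffices to prove that if $G=K_{n_1,\dots,n_k}$ has two parts whose sizes differ by at least $2$, then moving a single vertex from the larger part to the smaller one strictly increases the $\mathcal{ABS}$ index. Iterating such moves then forces $G$ toward $T(n,k)$, and strict monotonicity gives the uniqueness of the extremal graph.

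First I would record the closed form. In $K_{n_1,\dots,n_k}$ every vertex of the $i$-th part has degree $n-n_i$, and there are $n_in_j$ edges between the $i$-th and $j$-th parts, so with $f(x):=\sqrt{1-2/x}$,
$$\mathcal{ABS}(K_{n_1,\dots,n_k})=\sum_{1\le i<j\le k} n_in_j\,f(2n-n_i-n_j).$$
Assume without loss of generality $n_1\ge n_2+2$, fix $n_3,\dots,n_k$, and set $M=n_1+n_2$. Collecting the terms that involve part $1$ or part $2$ yields
$$\mathcal{ABS}(K_{n_1,\dots,n_k})=C+n_1n_2\,f(2n-M)+h(n_1)+h(n_2),\qquad h(t):=t\sum_{\ell=3}^{k} n_\ell\,f(2n-n_\ell-t),$$
where the constant $C$ (edges among parts $3,\dots,k$) and the constant $f(2n-M)$ do not depend on how $M$ is split between parts $1$ and $2$. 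Writing $G'=K_{n_1-1,\,n_2+1,\,n_3,\dots,n_k}$ and using $(n_1-1)(n_2+1)-n_1n_2=n_1-n_2-1$,
$$\mathcal{ABS}(G')-\mathcal{ABS}(G)=(n_1-n_2-1)\,f(2n-M)+\bigl[h(n_2+1)-h(n_2)\bigr]-\bigl[h(n_1)-h(n_1-1)\bigr].$$

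The heart of the argument is concavity. A direct computation gives $f'(x)=x^{-2}(1-2/x)^{-1/2}>0$ and $f''(x)=\tfrac12(1-2/x)^{-3/2}x^{-4}(6-4x)<0$ for $x>\tfrac32$; since in every term above the argument of $f$ is a degree sum that is at least $n$, $f$ is increasing and strictly concave on the relevant range. Hence each summand $\phi_\ell(t)=t\,f(2n-n_\ell-t)$ has $\phi_\ell''(t)=-2f'(2n-n_\ell-t)+t\,f''(2n-n_\ell-t)<0$, so $h$ is strictly concave and its first differences are strictly decreasing. Because $n_2<n_1$, this gives $h(n_2+1)-h(n_2)\ge h(n_1)-h(n_1-1)$, and combined with $(n_1-n_2-1)f(2n-M)\ge f(2n-M)>0$ we conclude $\mathcal{ABS}(G')>\mathcal{ABS}(G)$.

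Finally I would iterate: the shift above strictly decreases $\sum_i n_i^2$ (by $2(n_1-n_2-1)\ge 2$) while strictly increasing $\mathcal{ABS}$, so after finitely many steps one reaches a complete $k$-partite graph in which no two part sizes differ by more than $1$, namely $T(n,k)$; strict monotonicity along the way then shows $\mathcal{ABS}(G)<\mathcal{ABS}(T(n,k))$ unless $G\cong T(n,k)$ to begin with. The main obstacle is the concavity step — verifying that $f$, and therefore each $\phi_\ell$ and hence $h$, is (strictly) concave precisely on the set of degree sums that actually occur, with enough domain bookkeeping that the elementary "decreasing first differences" comparison is legitimate; once that is established, the remaining steps are routine algebra.
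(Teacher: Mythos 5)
Your proof is correct and follows essentially the same route as the paper: both perform the single-vertex shift $K_{t_1,t_2,\dots,t_k}\to K_{t_1-1,t_2+1,\dots,t_k}$ when $t_1\ge t_2+2$ and show that the $\mathcal{ABS}$ index strictly increases, with the key inequality in each case reducing to the concavity of $f(x)=\sqrt{1-2/x}$ on the relevant range (the paper packages this as monotonicity of the auxiliary function $\zeta$, you as strict concavity of $h$ and decreasing first differences). Your decomposition $C+n_1n_2\,f(2n-M)+h(n_1)+h(n_2)$ is a somewhat cleaner bookkeeping of the same estimate, but it is not a different argument.
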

 \begin{proof}
 Let $\Gamma\cong K_{t_1,t_2,\dots,t_k}$ be a complete $k$-partite graph with maximum $\mathcal{ABS}$ index in the class of complete $k$-partite graphs. Suppose $|t_{i}-t_{j}|\ge 2$ for some $1\le i< j\le k$. Then without loss of generality, we can  assume that $t_{1}-t_{2}\ge2$. Let $\Gamma^{\prime}=K_{t_1-1,t_2+1,\dots,t_k}$. By direct calculation, 
\begin{align*}
\mathcal{ABS}(\Gamma)-\mathcal{ABS}(\Gamma^{\prime})&=
t_1t_2\sqrt{1-\dfrac{2}{2n-(t_1+t_2)}}-(t_1-1)(t_2+1)\sqrt{1-\dfrac{2}{2n-(t_1+t_2)}}\\[2mm]&+\sum\limits_{i=3}^k t_1t_i\left(\sqrt{1-\dfrac{2}{2n-t_1-t_i}}-\sqrt{1-\dfrac{2}{2n-t_i-t_1+1}}\right)\\[2mm]&+\sum\limits_{i=3}^k t_2t_i\left(\sqrt{1-\dfrac{2}{2n-t_2-t_i}}-\sqrt{1-\dfrac{2}{2n-t_2-t_i-1}}\right)\\[2mm]
&+\sum_{i=3}^{k}t_{i}\left(\sqrt{1-\dfrac{2}{2n-t_{i}-t_{1}+1}}-\sqrt{1-\dfrac{2}{2n-t_{i}-t_{2}-1}}\right).
 \end{align*}
Since $t_{1}\ge t_{2}+2$, \begin{align}\label{peeq1}\mathcal{ABS}(\Gamma)-\mathcal{ABS}(\Gamma^{\prime})&< \sum\limits_{i=3}^k t_2t_i\left(\sqrt{1-\dfrac{2}{2n-t_1-t_i}}-\sqrt{1-\dfrac{2}{2n-t_i-t_1+1}}\right.
\notag\\[2mm]&\left.+\sqrt{1-\dfrac{2}{2n-t_2-t_i}}-\sqrt{1-\dfrac{2}{2n-t_2-t_i-1}}\right).\end{align}
 Consider $\zeta(x)=\sqrt{1-\dfrac{2}{x-t_{i}}}-\sqrt{1-\dfrac{2}{x-t_{i}+1}}$. Employing the first derivative test, we see that the  function $\zeta(x)$ is increasing. So, $\zeta(2n-t_{2}-1)\ge \zeta(2n-t_{1})$ as $t_{1}\ge t_{2}+2$, and thus from inequality (\ref{peeq1}), $\mathcal{ABS}(\Gamma)<\mathcal{ABS}(\Gamma^{\prime})$.   
 This is a contradiction. Hence $\Gamma\cong T(n,k)$.
  \end{proof}
 The join of two graphs $G_{1}$ and $G_{2}$ is obtained from $G_{1}$ and $G_{2}$ by joining each vertex of $G_{1}$ with every vertex in $G_{2}$. It is denoted by $G_{1}\vee G_{2}$. We need the following theorem to prove our next theorem.
\begin{theorem}\rm\cite{gao2018graphs,ali2024extremal}\label{kpartiteness th}
If $G$ is a graph possessing the maximum $\mathcal{ABS}$ index in the class of  connected graphs of order $n$ with vertex $k$-partiteness at most $r$ ($1\le r\le n-k$). Then there exist $k$ non-negative integers $t_1,t_2,\dots,t_k$ such that $\sum\limits_{i=1}^k t_i=n-r$ and $G\cong K_r\vee K_{t_1,t_2,\dots,t_k}$.
\end{theorem}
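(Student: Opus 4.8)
The plan is to combine the strict monotonicity of the $\mathcal{ABS}$ index under edge addition with the definition of vertex $k$-partiteness. The first ingredient is the monotonicity principle: if $G'$ is obtained from $G$ by adding one new edge $uv$, then $\mathcal{ABS}(G')>\mathcal{ABS}(G)$. Indeed, the operation raises $d(u)$ and $d(v)$ each by one; since $x\mapsto\sqrt{1-2/x}$ is strictly increasing on $[2,\infty)$, every summand of $\mathcal{ABS}$ attached to an edge incident with $u$ or $v$ strictly increases, and an additional positive summand $\sqrt{1-2/(d_{G'}(u)+d_{G'}(v))}$ is gained. This is the standard fact underlying Theorems~\ref{clique theorem} and~\ref{kpartiteness th}.

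Now let $G$ be a maximizer of $\mathcal{ABS}$ in the stated class, put $s=v_k(G)\le r$, and fix a set $S\subseteq V(G)$ with $|S|=s$ such that $G-S$ is $k$-partite with parts $V_1,\dots,V_k$ (some possibly empty). The key point is that each of the following edges, if absent, could be added without increasing $v_k$: (i) any edge inside $S$; (ii) any edge between $S$ and $V(G)\setminus S$; (iii) any edge between two distinct parts $V_i$, $V_j$. In cases (i) and (ii) the graph $(G+e)-S$ equals $G-S$, and in case (iii) it equals $(G-S)+e$ with the same $k$-partition; in all cases $(G+e)-S$ is $k$-partite, so $v_k(G+e)\le|S|\le r$. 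By the monotonicity principle this would contradict the extremality of $G$, so every such edge is already present. Hence $S$ induces a clique, each vertex of $S$ is adjacent to every other vertex, and $G-S$ is the complete $k$-partite graph $K_{|V_1|,\dots,|V_k|}$; that is, $G\cong K_{s}\vee K_{|V_1|,\dots,|V_k|}$ (in particular connected, since one is forced to have $s\ge1$ below).

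It remains to show $s=r$. Suppose $s<r$. Since $r\le n-k$, we have $|V_1|+\cdots+|V_k|=n-s>n-r\ge k$, so some part, say $V_1$, satisfies $|V_1|\ge2$. Pick $v\in V_1$ and set $S'=S\cup\{v\}$, so $|S'|=s+1\le r$ and $G-S'$ is still $k$-partite; but $G$ does not contain the edges from $v$ to $V_1\setminus\{v\}$, and these are of type (ii) relative to $S'$, so adding any of them keeps $v_k\le r$. Again monotonicity contradicts extremality. Therefore $s=r$, and setting $t_i=|V_i|$ yields $G\cong K_r\vee K_{t_1,\dots,t_k}$ with $\sum_{i=1}^{k}t_i=n-r$, as claimed.

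I do not anticipate a genuine obstacle here; the points that need care are stating the monotonicity lemma precisely (every degree occurring is at least $1$, and the terms $\sqrt{1-2/x}$ are evaluated only where the function is defined and strictly increasing, so the inequalities are indeed strict) and observing that the non-uniqueness of the $k$-partition of $G-S$ is harmless, since the whole argument works with one fixed such partition. One can then feed the conclusion into Theorem~\ref{turan} to identify the optimal parameters $t_1,\dots,t_k$.
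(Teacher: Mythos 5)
Your proof is correct. Note that the paper does not prove this statement at all: Theorem~\ref{kpartiteness th} is imported as a known result from the cited references, so there is no in-paper argument to compare against. Your reconstruction is the standard one for results of this type and is sound: the strict edge-addition monotonicity of $\mathcal{ABS}$ is correctly justified (in a connected graph on $n\ge 2$ vertices all degrees are at least $1$, so every affected term strictly increases and the new term is nonnegative, with at least one strict increase), the saturation argument forces $G\cong K_s\vee K_{|V_1|,\dots,|V_k|}$ for $s=v_k(G)$, and the final step correctly uses $r\le n-k$ to find a part of size at least $2$ and push $s$ up to $r$. The only cosmetic remark is that your parenthetical ``in particular connected, since one is forced to have $s\ge 1$'' is doing no real work at that point in the argument; connectivity of the final graph follows once $s=r\ge 1$ is established, and before that the intermediate graph is already connected because $G$ was assumed connected and you only add edges.
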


In the following theorem, we identify the graph with maximum $\mathcal{ABS}$ index in the class of connected $n$-order graphs that have fixed vertex $k$-partiteness $v_{k}(G)$. 
\begin{theorem}\label{KPAR}
Let $G$ be a graph of order $n$ with vertex $k$-partiteness $v_{k}(G)=r$ ($1\le r\le n-k$). Then $\mathcal{ABS}(G)\le \mathcal{ABS}(K_{r}\vee T(n-r,k))$. Equality holds if and only if $G\cong K_{r}\vee T(n-r,k)$.
\end{theorem}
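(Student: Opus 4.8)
The plan is to deduce the statement from Theorem~\ref{kpartiteness th} together with a Tur\'an-type balancing argument carried out in the spirit of the proof of Theorem~\ref{turan}.

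\emph{Step 1: reduce to the ``at most $r$'' class.} Since every graph with $v_k(G)=r$ also satisfies $v_k(G)\le r$, it suffices to show that $\mathcal{ABS}$ is maximized over the larger class of connected $n$-vertex graphs with $v_k(\cdot)\le r$ by $K_r\vee T(n-r,k)$, and that this graph in fact has vertex $k$-partiteness exactly $r$. The second point is immediate: because $1\le r\le n-k$ we have $n-r\ge k$, so every part of $T(n-r,k)$ is nonempty and $\chi\big(K_r\vee T(n-r,k)\big)=r+k$; deleting the $r$ vertices of the $K_r$ leaves a $k$-partite graph, while since $\chi(H-S)\ge \chi(H)-|S|$, deleting fewer than $r$ vertices cannot reduce the chromatic number to $k$. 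Hence $v_k\big(K_r\vee T(n-r,k)\big)=r$, and the graph is connected. Consequently, once Step~2 is established, the (unique) maximizer of the larger class lies inside the smaller class and is therefore also its unique maximizer, which yields both the inequality and the characterization of equality.

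\emph{Step 2: balancing.} Let $G^{*}$ attain the maximum $\mathcal{ABS}$ index among connected $n$-vertex graphs with $v_k(\cdot)\le r$. By Theorem~\ref{kpartiteness th}, $G^{*}\cong K_r\vee K_{t_1,\dots,t_k}$ for some nonnegative integers $t_1,\dots,t_k$ with $\sum_i t_i=n-r$. It remains to prove that, among all such tuples, $\mathcal{ABS}\big(K_r\vee K_{t_1,\dots,t_k}\big)$ is strictly largest at the balanced tuple, i.e.\ when $|t_i-t_j|\le1$ for all $i,j$; that balanced tuple is unique up to permutation, has all entries at least $1$ (as $n-r\ge k$), and gives precisely $K_r\vee T(n-r,k)$. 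In $\Gamma:=K_r\vee K_{t_1,\dots,t_k}$ a vertex in the $i$-th part has degree $n-t_i$ and each of the $r$ vertices of $K_r$ has degree $n-1$. Assume $t_1\ge t_2+2$ and compare $\Gamma$ with $\Gamma':=K_r\vee K_{t_1-1,\,t_2+1,\,t_3,\dots,t_k}$. Splitting $\mathcal{ABS}(\Gamma)-\mathcal{ABS}(\Gamma')$ by edge type --- part-$1$/part-$2$ edges; part-$1$/part-$i$ and part-$2$/part-$i$ edges for $i\ge3$; $K_r$/part-$1$ and $K_r$/part-$2$ edges; and all remaining edges, whose contributions cancel --- and using the monotonicity (by the first-derivative test) of auxiliary functions $\zeta(x)=\sqrt{1-\tfrac{2}{x-a}}-\sqrt{1-\tfrac{2}{x-a+1}}$ together with $t_1\ge t_2+2$, exactly as in the proof of Theorem~\ref{turan}, one obtains $\mathcal{ABS}(\Gamma)<\mathcal{ABS}(\Gamma')$. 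Iterating this shift whenever two parts differ by at least $2$ drives $(t_1,\dots,t_k)$ to the balanced tuple with a strict gain at every step, so $K_r\vee T(n-r,k)$ is the unique maximizer, completing the argument by Step~1.

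The part I expect to require the most care is the block of edges joining $K_r$ to the two parts being modified: unlike in the purely complete-multipartite situation of Theorem~\ref{turan}, shifting one unit changes both the number ($r t_1$ versus $r(t_1-1)$, and $r t_2$ versus $r(t_2+1)$) and the weight ($\sqrt{1-2/(2n-1-t_1)}$ versus $\sqrt{1-2/(2n-t_1)}$, and similarly for $t_2$) of these edges, so one must verify that this extra block does not reverse the sign of the difference. I expect it actually reinforces the gain --- moving mass toward balance raises the affected part-degrees $n-t_i$ toward $n-1$ --- but this is the single spot where the computation genuinely departs from the cited proof; the natural fix is to group this block with the part-$1$/part-$2$ term and estimate the combination by one monotone function, just as the last lines of the proof of Theorem~\ref{turan} dispose of their analogous leftover terms.
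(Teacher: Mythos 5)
Your proposal follows essentially the same route as the paper: invoke Theorem~\ref{kpartiteness th} to reduce to graphs of the form $K_r\vee K_{t_1,\dots,t_k}$, then show that shifting one vertex from a part to another part smaller by at least two strictly increases the $\mathcal{ABS}$ index, handling the extra block of $K_r$-to-part edges by grouping it and comparing values of an increasing auxiliary function --- which is precisely what the paper does with its $\zeta_{1}$. Your Step~1 (passing through the class $v_k\le r$ and verifying $v_k\big(K_r\vee T(n-r,k)\big)=r$) is a more careful treatment of a point the paper leaves implicit, since Theorem~\ref{kpartiteness th} is stated for the ``at most $r$'' class while the theorem concerns the class with $v_k$ exactly $r$.
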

\begin{proof}
Let $\Gamma$ be a graph with maximum $\mathcal{ABS}$ index in the class of graphs with $k$-partiteness $v_{k}(G)=r$.  
By Theorem~\ref{kpartiteness th}, $G\cong K_r\vee K_{t_1,t_2,\dots,t_k}$. Suppose $|t_{i}-t_{j}|\ge 2$ for some $1\le i\le j\le k$. Then without loss of generality, we can  assume that $t_{1}-t_{2}\ge2$. Let $\Gamma^{\prime}=K_{r}\vee K_{t_1-1,t_2+1,\dots,t_k}$. By direct calculation,
 $\mathcal{ABS}(\Gamma)-\mathcal{ABS}(\Gamma^{\prime})=$ \begin{align*}&~rt_1\sqrt{1-\dfrac{2}{2n-(t_1+1)}}+rt_2\sqrt{1-\dfrac{2}{2n-(t_2+1)}}+\sum\limits_{i=2}^k t_1t_i\sqrt{1-\dfrac{2}{2n-(t_1+t_i)}}\\[2mm]&+\sum\limits_{i=3}^k t_2t_i\sqrt{1-\dfrac{2}{2n-(t_2+t_i)}}-r(t_1-1)\sqrt{1-\dfrac{2}{2n-t_1}}-r(t_2+1)\sqrt{1-\dfrac{2}{2(n-1)-t_2}}\\[2mm]&-(t_1-1)(t_2+1)\sqrt{1-\dfrac{2}{2n-(t_1+t_2)}}-\sum\limits_{i=3}^k (t_1-1)t_i\sqrt{1-\dfrac{2}{2n-(t_1+t_i)+1}}\\[2mm]&-\sum\limits_{i=3}^k (t_2+1)t_i\sqrt{1-\dfrac{2}{2n-(t_2+t_i)-1}}.
\end{align*}
Since $t_{1}\ge t_{2}+2$, $\mathcal{ABS}(\Gamma)-\mathcal{ABS}(\Gamma^{\prime})<$ {\small\begin{align*}\label{peeq2} &\sum\limits_{i=3}^k t_2t_i\left(\sqrt{1-\dfrac{2}{2n-t_1-t_i}}-\sqrt{1-\dfrac{2}{2n-t_i-t_1+1}}+\sqrt{1-\dfrac{2}{2n-t_2-t_i}}-\sqrt{1-\dfrac{2}{2n-t_2-t_i-1}}\right)\notag\\[2mm]&-rt_{2}\left(\sqrt{1-\dfrac{2}{2n-t_1}}-\sqrt{1-\dfrac{2}{2n-t_1-1}}-\sqrt{1-\dfrac{2}{2n-1-t_2}}+\sqrt{1-\dfrac{2}{2n-t_2-2}}\right)\notag\\[2mm]
&=\sum\limits_{i=3}^k\big(t_2t_i(\zeta(2n-t_{1})-\zeta(2n-t_{2}-1))\big)-rt_{2}\left(\zeta_{1}(t_{1})-\zeta_{1}(t_{2}+1)\right),	
\end{align*}}
where $\zeta(x)$ is as defined in Theorem \ref{KPAR} and $\zeta_{1}(x)=\sqrt{1-\dfrac{2}{2n-x}}-\sqrt{1-\dfrac{2}{2n-x-1}}$. Since $\zeta$ and $\zeta_{1}$ are increasing functions, and as $t_{1}\ge t_{2}+2$ , $\zeta(2n-t_{2}-1)\ge \zeta(2n-t_{1})$ and $\zeta_{1}(t_{1})\ge \zeta_{1}(t_{2}+1)$. Thus, $\mathcal{ABS}(\Gamma)<\mathcal{ABS}(\Gamma^{\prime})$, a contradiction. This completes the proof of the theorem. 
\end{proof}
 
\section{$\mathcal{ABS}$ index of bipartite graphs with given vertex connectivity}\label{given3}
In this section, we identify the maximum value of the $\mathcal{ABS}$ index among the class of connected bipartite graphs with given vertex connectivity. The following graph class is crucial to this section.\\[2mm]
Let $n_{i}$ be a non-negative integer for $i=1,2,3,4,5,6$.  The graph $\overline{\mathcal{K}}[n_{1},n_{2},n_{3},n_{4},n_{5},n_{6}]$ is obtained from the graphs $H_{1}=\overline{K}_{n_{1}}, H_{2}=\overline{K}_{n_{2}}, H_{3}=\overline{K}_{n_{3}}, H_{4}=\overline{K}_{n_{4}}, H_{5}=\overline{K}_{n_{5}}$, and $H_{6}=\overline{K}_{n_{6}}$ by joining each vertex of $H_{1}$ (respectively $H_{3}$) with every vertex in $H_{4}$ and $H_{5}$ (respectively $H_{5}$ and $H_{6}$), and also joining each of vertex of $H_{2}$ with every vertex in $H_{4}$, $H_{5}$, and $H_{6}$. See Fig. \ref{F1ABS}.\\
The class of connected bipartite graphs of order $n$ and vertex connectivity $\kappa$ is denoted as $\mathcal{BG}_{n,\kappa}$. Also, the subgraph of $G$ induced by a vertex subset $S$ of $G$ is denoted as $G[S]$. The following theorem provides key information about the structure of  graphs in $\mathcal{BG}_{n,k}$ that attain the maximum $\mathcal{ABS}$ index.

\begin{theorem}\rm\cite{chen2019extremal,ali2024extremal}\label{bivertexc}
	 Let $\Gamma$ be a graph with maximum $\mathcal{ABS}$ index in the class $\mathcal{BG}_{n,\kappa}$.  Let $S$ be a minimum vertex cut set of $G$. If $G-S$ has a non-trivial component, then $G-S$ has exactly two components, namely $H_1$ and $H_2$, such that the graphs $G[S\cup V(H_1)]$ and $G[S\cup V(H_2)]$ are complete bipartite.
\end{theorem}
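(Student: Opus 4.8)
The plan is to run a standard edge‑addition argument, using the fact that the $\mathcal{ABS}$ index is strictly increasing under the addition of an edge (see \cite{wu2017monotonicity}): inserting $xy$ creates a new positive summand $\sqrt{1-\tfrac{2}{d(x)+d(y)+2}}$ and strictly increases the summand of every edge incident to $x$ or $y$, because $t\mapsto\sqrt{1-2/t}$ is strictly increasing for $t\ge 2$. Consequently, an extremal graph $\Gamma=G$ in $\mathcal{BG}_{n,\kappa}$ admits no edge addition that keeps it inside $\mathcal{BG}_{n,\kappa}$; the only possible obstructions to adding an edge are that it would destroy bipartiteness or raise the vertex connectivity above $\kappa$. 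Throughout, fix the (unique) bipartition $(X,Y)$ of the connected bipartite graph $G$, put $S_X=S\cap X$, $S_Y=S\cap Y$, and let $C_1,\dots,C_m$ with $m\ge 2$ be the components of $G-S$, each inheriting the bipartition $C_i^X=V(C_i)\cap X$, $C_i^Y=V(C_i)\cap Y$. Recall $|S|=\kappa$.

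The first step is to show that $G[S\cup V(C_i)]$ is complete bipartite, with parts $S_X\cup C_i^X$ and $S_Y\cup C_i^Y$, for every $i$. If some vertex of $S_X\cup C_i^X$ were non‑adjacent to some vertex of $S_Y\cup C_i^Y$, add that edge: the new graph $G'$ is still connected and bipartite of order $n$; moreover $G'-S$ has exactly the same vertex partition into components as $G-S$ (the added edge either disappears with $S$ or lies inside $C_i$, which stays connected), so $S$ is still a cut set and the connectivity of $G'$ is $\le|S|=\kappa$, while by monotonicity it is $\ge\kappa$; hence $G'\in\mathcal{BG}_{n,\kappa}$ and $\mathcal{ABS}(G')>\mathcal{ABS}(G)$, contradicting extremality.

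The second step is to show $m=2$ under the hypothesis that some component, say $C_1$, is non‑trivial; being connected bipartite with at least one edge, $C_1$ has $C_1^X\ne\emptyset$ and $C_1^Y\ne\emptyset$. Suppose $m\ge 3$ and pick any other component $C_2$. Let $G'$ be obtained from $G$ by adding all edges needed to make $G'[S\cup V(C_1)\cup V(C_2)]$ complete bipartite with parts $S_X\cup C_1^X\cup C_2^X$ and $S_Y\cup C_1^Y\cup C_2^Y$. Each vertex of $C_2$ lies in $X$ or in $Y$ and, in $G$, was adjacent only to vertices of $S$; since $C_1^X$ and $C_1^Y$ are both non‑empty, at least one genuinely new edge is created, so $\mathcal{ABS}(G')>\mathcal{ABS}(G)$. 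On the other hand $G'$ is connected bipartite of order $n$, and in $G'-S$ the set $V(C_1)\cup V(C_2)$ now induces a connected (complete bipartite) graph, so $G'-S$ has the $m-1\ge 2$ components $\{V(C_1)\cup V(C_2)\},C_3,\dots,C_m$; thus $S$ is still a cut set, the connectivity of $G'$ is exactly $\kappa$, and $G'\in\mathcal{BG}_{n,\kappa}$ — contradicting extremality. Hence $m=2$, and together with the first step this yields the asserted structure, with $H_1,H_2$ the two components.

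The main point requiring care, and essentially the only non‑routine part, is the connectivity bookkeeping at each edge addition: one must verify that the modification keeps $S$ a cut set of size exactly $\kappa$, i.e. that it neither spawns extra components "behind" $S$ nor fuses all components into one, and that no smaller cut appears (the latter is automatic, since edge additions never decrease vertex connectivity). It is also worth recording that the hypothesis of a non‑trivial component is genuinely needed: when every component of $G-S$ is a single vertex the merging step may fail to merge anything (e.g. all singletons on the same side of $(X,Y)$), which is consistent with the theorem saying nothing in that case. The degenerate subcases — a singleton component, or $S_X$ or $S_Y$ empty — are absorbed by exactly the same arguments.
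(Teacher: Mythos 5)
Your argument is correct. Note, however, that the paper does not prove Theorem~\ref{bivertexc} at all: it is imported verbatim from \cite{chen2019extremal,ali2024extremal}, so there is no in-paper proof to compare against. Your edge-addition proof is exactly the standard route used in those sources for edge-monotone bond-incident-degree indices: strict monotonicity of $\mathcal{ABS}$ under edge insertion (the new edge contributes a positive term and every term $\sqrt{1-2/(d(u)+d(v))}$ incident to an endpoint strictly increases), combined with the bookkeeping that each insertion preserves bipartiteness and keeps $S$ a cut of size $\kappa$ (so $\kappa(G')=\kappa$, since adding edges cannot lower connectivity). Both steps — saturating $G[S\cup V(C_i)]$ to a complete bipartite graph, and merging two of $m\ge 3$ components using the non-trivial one to guarantee a genuinely new edge — are handled correctly, including the observation that the non-triviality hypothesis is what makes the merging step work. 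One phrase is slightly off: a vertex of $C_2$ is adjacent to vertices of $S$ \emph{and of $C_2$}, not ``only to vertices of $S$''; but your conclusion that at least one new $C_1$--$C_2$ edge is created only needs that $C_2$ has no neighbours in $C_1$ and that both sides of $C_1$ are non-empty, so the argument stands.
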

\begin{remark}\label{fremark}
From Theorem \ref{bivertexc}, a graph that attains the maximum $\mathcal{ABS}$ index in $\mathcal{BG}_{n,\kappa}$ must be isomorphic to one of the following graphs\\[2mm] 
(i) $\overline{\mathcal{K}}[n_{1},n_{2},n_{3},n_{4},n_{5},n_{6}]$, where  $n_{i}\ge 1$ for $i=1,3,4,6$, $n_{2}+n_{5}=\kappa$, $n_{1},n_{3}\ge n_{5}$ and $n_{4},n_{6}\ge n_{2}$.\\[2mm]
(ii) $\overline{\mathcal{K}}[0,n_{2},n_{3},n_{4},0,n_{6}]$, where $n_{3},n_{4}\ge1$,  $n_{2}=\kappa$ and $n_{6}\ge n_{2}$.\\[2mm]   
(iii) $\overline{\mathcal{K}}[n_{1},0,n_{3},0,n_{5},n_{6}]$, where $n_{1},n_{6}\ge 1$, $n_{5}=\kappa$ and $n_{3}\ge n_{5}$.   
\end{remark}
\begin{figure}[H]
\includegraphics[width=1\textwidth]{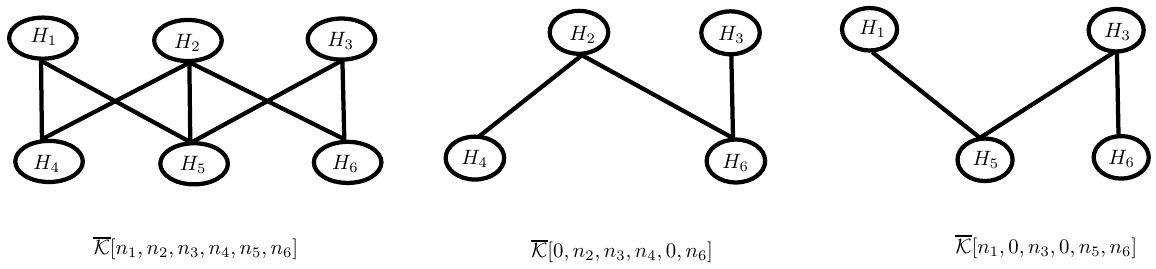}
\caption{The graphs discussed in Remark~\ref{fremark}.}
\label{F1ABS}
\end{figure}
\begin{lemma}\label{fl1}
For the graph $\overline{\mathcal{K}}[n_{1},n_{2},n_{3},n_{4},n_{5},n_{6}]$, where  $n_{i}\ge 1$ for $i=1,4,6$, $n_{3}\ge 2$, $n_{2}+n_{5}=\kappa$, $n_{1},n_{3}\ge n_{5}$ and $n_{4},n_{6}\ge n_{2}$, \[ \overline{\mathcal{K}}[n_{1},n_{2},n_{3},n_{4},n_{5},n_{6}]<\overline{\mathcal{K}}[n_{1}+n_{2}+n_{3}-1,0,1,n_{4}+n_{6}-n_{2},n_{5}+n_{2},0]\]	
\end{lemma}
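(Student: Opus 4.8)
The plan is to compare the two graphs edge-by-edge by tracking how each vertex's degree changes. Write $G = \overline{\mathcal{K}}[n_{1},n_{2},n_{3},n_{4},n_{5},n_{6}]$ and $G' = \overline{\mathcal{K}}[n_{1}+n_{2}+n_{3}-1,\,0,\,1,\,n_{4}+n_{6}-n_{2},\,n_{5}+n_{2},\,0]$. In $G$, the vertices in the parts $H_1,H_3$ have degree $n_4+n_5$ and $n_5+n_6$ respectively, those in $H_2$ have degree $n_4+n_5+n_6$, those in $H_4,H_6$ have degree $n_1+n_2$ and $n_2+n_3$, and those in $H_5$ have degree $n_1+n_2+n_3$. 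In $G'$, after the relabelling the ``left parts'' $H_1',H_2'=\varnothing,H_3'$ have sizes $n_1+n_2+n_3-1$ and $1$, the new middle part $H_5'$ has size $n_5+n_2$, and $H_4'$ has size $n_4+n_6-n_2$ with $H_6'=\varnothing$. First I would verify that $G'$ is a legitimate member of the family in Remark~\ref{fremark}(iii) (so that the comparison is between two graphs in $\mathcal{BG}_{n,\kappa}$ with the same order and connectivity): both graphs have $n_1+\dots+n_6$ vertices, and one checks the new parameters satisfy the stated inequalities — $n_5+n_2=\kappa$, the single vertex in $H_3'$ needs degree at least $\kappa$, i.e. $n_4+n_6-n_2+n_5+n_2=n_4+n_6+n_5\ge\kappa=n_5+n_2$, which holds since $n_4\ge n_2$.

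Next I would set up the degree bookkeeping carefully. The key structural observation is that $G'$ is obtained from $G$ by a ``clique-like compression'': collapse $H_1,H_2,H_3$ (minus one vertex, which becomes $H_3'$) into one big independent part $H_1'$ all joined to $H_4'\cup H_5'$, absorb $H_6$ into $H_4$, and absorb $H_2$ into $H_5$. Under this move every vertex of the merged ``left'' block sees the full opposite side $H_4'\cup H_5'$ of size $n_4+n_6+n_5$, which is at least as large as any neighborhood it had in $G$; similarly vertices of $H_4'\cup H_5'$ end up adjacent to all of $H_1'\cup H_3'$ of size $n_1+n_2+n_3$, again no smaller than before. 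So I expect every individual degree to be nondecreasing, and in fact strictly larger for generic vertices. Since each summand $\sqrt{1-2/(d(u)+d(v))}$ is increasing in both $d(u)$ and $d(v)$, and since $G'$ has at least as many edges as $G$ (it is complete bipartite between a part of size $n_1+n_2+n_3$ and one of size $n_4+n_5+n_6$, minus possibly nothing), the total $\mathcal{ABS}$ can only go up. To nail the strict inequality I would exhibit one edge whose endpoint-degree sum strictly increases — e.g. an edge incident to a vertex of $H_4$, whose degree jumps from $n_1+n_2$ to $n_1+n_2+n_3\ge n_1+n_2+2$ since $n_3\ge2$ — and note no edge is lost, giving $\mathcal{ABS}(G)<\mathcal{ABS}(G')$.

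The main obstacle is making the ``every degree is nondecreasing and no edge disappears'' claim fully rigorous, because the vertex counts of the parts are being rearranged, so one must check that the edge set of $G$ genuinely embeds into the edge set of $G'$ (up to the relabelling of which original vertex lands in which new part) rather than just counting edges. Concretely I would fix an explicit bijection of vertex sets: send $H_5\cup H_2$ to $H_5'$, send $H_4\cup H_6$ to $H_4'$ (using $n_4+n_6\ge n_4+n_6-n_2$ — wait, that needs a pendant-type adjustment since $|H_4'|=n_4+n_6-n_2<n_4+n_6$; so in fact $G'$ has a part of size $n_4+n_6-n_2$, not $n_4+n_6$, meaning one must be slightly more careful and note that $n_2$ vertices of $H_2$ move to the left block, not the right). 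The cleanest route is: $H_1',H_3'$ together receive the $n_1+n_2+n_3$ vertices of $H_1\cup H_2\cup H_3$; $H_5'$ receives the $n_5$ vertices of $H_5$ together with... — here I would double-check the arithmetic $|H_5'|=n_5+n_2$ against $|H_5|+|H_2|=n_5+n_2$, which matches, so $H_2$ goes entirely into $H_5'$, and then $|H_4'|=n_4+n_6-n_2$ must receive $H_4\cup H_6$ which has $n_4+n_6$ vertices — a mismatch by $n_2$, so the relabelling is not a vertex bijection unless $n_2=0$. This means the honest proof cannot be a pure ``edge-embedding'' argument; instead one must compute $\mathcal{ABS}(G)-\mathcal{ABS}(G')$ as an explicit signed sum of square-root terms grouped by the six blocks (exactly in the style of Theorem~\ref{turan} and Theorem~\ref{KPAR} above), then bound it using monotonicity of $\zeta$-type auxiliary functions; I would model the estimate on those proofs, isolating the dominant positive contributions coming from the enlarged left-block degrees and showing they outweigh any terms of the opposite sign, invoking $n_3\ge2$, $n_i\ge1$ for $i=1,4,6$, and the constraints $n_1,n_3\ge n_5$, $n_4,n_6\ge n_2$ at the point where the inequality is tightest.
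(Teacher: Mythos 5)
Your proposal does not contain a proof. Its first two paragraphs rest on the claim that every vertex degree is nondecreasing and that the edge set of $G$ embeds into that of $G'$; you then correctly notice that the part sizes do not match up (the $n_2$ vertices of $H_2$ cannot all be absorbed into $H_4'\cup H_6'$), but the embedding heuristic fails for a second, more basic reason you did not flag: degrees genuinely \emph{decrease}. The single vertex forming $H_3'$ has degree $n_5+n_2=\kappa$ in $G'$, whereas every vertex of $H_3$ has degree $n_5+n_6\ge n_5+n_2$ in $G$ (with strict inequality when $n_6>n_2$), and likewise each vertex of $H_4'$ has degree $n_1+n_2+n_3-1$, which is one less than the degree $n_1+n_2+n_3$ of an $H_5$-vertex of $G$. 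So monotonicity of the summand in the degrees cannot by itself give the inequality, and some terms of $\mathcal{ABS}(G')-\mathcal{ABS}(G)$ really are negative.

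Your fallback --- ``compute the difference as an explicit signed sum grouped by blocks and bound it in the style of Theorem~\ref{turan}'' --- is indeed the route the paper takes, but stating the plan is not the proof: all of the content of this lemma lives in executing that estimate. Concretely, after writing out both $\mathcal{ABS}$ values and discarding, via the monotonicity of $x\mapsto\sqrt{1-2/(n-x)}$, every bracket that is manifestly nonnegative, one is left with
\[
(n_{3}-1)n_{4}\sqrt{1-\tfrac{2}{n-1}}+n_{1}n_{6}\sqrt{1-\tfrac{2}{n-1}}-n_{6}\sqrt{1-\tfrac{2}{n-n_{1}-n_{4}}}-n_{5}\sqrt{1-\tfrac{2}{n-n_{4}}}+(n_{5}+n_{2})\sqrt{1-\tfrac{2}{n-n_{4}-n_{6}+n_{2}}},
\]
and the lemma reduces to showing this is nonnegative. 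That step is where the hypotheses $n_{1},n_{3}\ge n_{5}$, $n_{4},n_{6}\ge1$ and $n_{3}\ge2$ are actually used: one bounds the expression below by $(n_{5}-1)(n_{4}+n_{6})\sqrt{1-2/(n-1)}-n_{5}\sqrt{1-2/(n-n_{4})}$, which is nonnegative for $n_{5}\ge2$ since $n_{4}+n_{6}\ge2$, and the case $n_{5}=1$ needs a separate argument exploiting $n_{3}\ge2$. None of this dominance analysis, nor the case split on $n_{5}$, appears in your proposal, so the decisive part of the argument is missing.
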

\begin{proof}
Let $n=n_{1}+n_{2}+n_{3}+n_{4}+n_{5}+n_{6}$. Then
\begin{align}\label{beq1} \overline{\mathcal{K}}[n_{1},n_{2},n_{3},n_{4},n_{5},n_{6}]=n_{1}n_{4}\sqrt{1-\dfrac{2}{n-n_{3}-n_{6}}}+n_{1}n_{5}\sqrt{1-\dfrac{2}{n-n_{6}}}+n_{2}n_{4}\sqrt{1-\dfrac{2}{n-n_{3}}}\notag\\[3mm]+n_{2}n_{5}\sqrt{1-\dfrac{2}{n}}+n_{2}n_{6}\sqrt{1-\dfrac{2}{n-n_{1}}}+n_{3}n_{5}\sqrt{1-\dfrac{2}{n-n_{4}}}+n_{3}n_{6}\sqrt{1-\dfrac{2}{n-n_{1}-n_{4}}}
\end{align}
and 
\begin{align}\label{beq2} \overline{\mathcal{K}}[n_{1}+n_{2}+n_{3}-1,0,1,n_{4}+n_{6}-n_{2},n_{5}+n_{2},0]=(n_{1}+n_{2}+n_{3}-1)(n_{4}+n_{6}-n_{2})~~~~~~~\notag\\[3mm]\times\sqrt{1-\dfrac{2}{n-1}}+(n_{1}+n_{2}+n_{3}-1)(n_{5}+n_{2})\sqrt{1-\dfrac{2}{n}}+(n_{5}+n_{2})\sqrt{1-\dfrac{2}{n-n_{4}-n_{6}+n_{2}}}.
\end{align}
From equations (\ref{beq1}) and (\ref{beq2}),\\ $\overline{\mathcal{K}}[n_{1}+n_{2}+n_{3}-1,0,1,n_{4}+n_{6}-n_{2},n_{5}+n_{2},0]- \overline{\mathcal{K}}[n_{1},n_{2},n_{3},n_{4},n_{5},n_{6}]=$
\begin{align*}
&n_{1}n_{4}\left(\sqrt{1-\dfrac{2}{n-1}}-\sqrt{1-\dfrac{2}{n-n_{3}-n_{6}}}\right)+n_{1}n_{5}\left(\sqrt{1-\dfrac{2}{n}}-\sqrt{1-\dfrac{2}{n-n_{6}}}\right)\notag\\[3mm]&+n_{2}n_{4}\left(\sqrt{1-\dfrac{2}{n-1}}-\sqrt{1-\dfrac{2}{n-n_{3}}}\right)+n_{2}n_{6}\left(\sqrt{1-\dfrac{2}{n-1}}-\sqrt{1-\dfrac{2}{n-n_{1}}}\right)\notag\\[3mm]&+n^{2}_{2}\left(\sqrt{1-\dfrac{2}{n}}-\sqrt{1-\dfrac{2}{n-1}}\right)
+(n_{3}-1)n_{5}\left(\sqrt{1-\dfrac{2}{n}}-\sqrt{1-\dfrac{2}{n-n_{4}}}\right)\notag\\[3mm]&+\left[(n_{3}-1)(n_{6}-n_{2})\sqrt{1-\dfrac{2}{n-1}}+(n_{3}-1)n_{2}\sqrt{1-\dfrac{2}{n}}-(n_{3}-1)n_{6}\sqrt{1-\dfrac{2}{n-n_{1}-n_{4}}}\right]\end{align*}
\begin{align}&+(n_{3}-1)n_{4}\sqrt{1-\dfrac{2}{n-1}}+n_{1}n_{2}\left(\sqrt{1-\dfrac{2}{n}}-\sqrt{1-\dfrac{2}{n-1}}\right)+n_{1}n_{6}\sqrt{1-\dfrac{2}{n-1}}\notag\\[3mm]&-n_{6}\sqrt{1-\dfrac{2}{n-n_{1}-n_{4}}}-n_{5}\sqrt{1-\dfrac{2}{n-n_{4}}}+(n_5+n_{2})\sqrt{1-\dfrac{2}{n-n_{4}-n_{6}+n_{2}}}.\notag\end{align}
Since $\sqrt{1-\dfrac{2}{n-x}}\ge \sqrt{1-\dfrac{2}{n-y}}$ for $x\le y$, we get\\[3mm] 
$\overline{\mathcal{K}}[n_{1}+n_{2}+n_{3}-1,0,1,n_{4}+n_{6}-n_{2},n_{5}+n_{2},0]- \overline{\mathcal{K}}[n_{1},n_{2},n_{3},n_{4},n_{5},n_{6}]$
\begin{align}\label{beq3}
\ge &(n_{3}-1)n_{4}\sqrt{1-\dfrac{2}{n-1}}+n_{1}n_{6}\sqrt{1-\dfrac{2}{n-1}}-n_{6}\sqrt{1-\dfrac{2}{n-n_{1}-n_{4}}}-n_{5}\sqrt{1-\dfrac{2}{n-n_{4}}}\notag\\[3mm]&+(n_5+n_{2})\sqrt{1-\dfrac{2}{n-n_{4}-n_{6}+n_{2}}}.\end{align}
Since $n_{3}\ge n_{5}$ and $n_{1}\ge n_{5}$, from equation (\ref{beq3}),\\[2mm]  
$\overline{\mathcal{K}}[n_{1}+n_{2}+n_{3}-1,0,1,n_{4}+n_{6}-n_{2},n_{5}+n_{2},0]- \overline{\mathcal{K}}[n_{1},n_{2},n_{3},n_{4},n_{5},n_{6}]$\\[2mm]
$\ge (n_{5}-1)(n_{4}+n_{6})\sqrt{1-\dfrac{2}{n-1}}-n_{5}\sqrt{1-\dfrac{2}{n-n_{4}}}$\\[3mm]
$\ge 2(n_{5}-1)\sqrt{1-\dfrac{2}{n-1}}-n_{5}\sqrt{1-\dfrac{2}{n-n_{4}}}\ge 0$ for $n_{5}>1$. If $n_{5}=1$, then from equation (\ref{beq3}), we immediately arrive at the desired inequality because $n_{3}\ge 2$. This completes the proof of the lemma.  
\end{proof}
\begin{lemma}\label{bl2}
 $\overline{\mathcal{K}}[0,n_{2},n_{3},n_{4},0,n_{6}]< \overline{\mathcal{K}}[0,n_{2},n_{3},1,0,n_{6}+n_{4}-1]$ where $n_{3}\ge 1$, $n_{4}\ge 2$,  $n_{6}\ge n_{2}$ and $n_{2}=\kappa$.	
\end{lemma}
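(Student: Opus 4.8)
The plan is to write out $\mathcal{ABS}$ for both graphs directly from their degree sequences, subtract, and split the resulting expression into manifestly non-negative groups, one of which is strictly positive. In $\overline{\mathcal{K}}[0,n_{2},n_{3},n_{4},0,n_{6}]$ the classes $H_{1}$ and $H_{5}$ are empty, so the only edges run between $H_{2}$ and $H_{4}$, between $H_{2}$ and $H_{6}$, and between $H_{3}$ and $H_{6}$. Putting $n=n_{2}+n_{3}+n_{4}+n_{6}$, the degrees are $n_{4}+n_{6}$ on $H_{2}$, $n_{2}$ on $H_{4}$, $n_{6}$ on $H_{3}$, and $n_{2}+n_{3}$ on $H_{6}$, so these three edge-types have degree sums $n-n_{3}$, $n$, $n-n_{4}$ and multiplicities $n_{2}n_{4}$, $n_{2}n_{6}$, $n_{3}n_{6}$. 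For the target graph $\overline{\mathcal{K}}[0,n_{2},n_{3},1,0,n_{6}+n_{4}-1]$ the order is again $n$, and the key point is that moving the surplus $n_{4}-1$ from $H_{4}$ into $H_{6}$ leaves the degree on $H_{2}$ equal to $1+(n_{6}+n_{4}-1)=n_{4}+n_{6}$ and the degree on $H_{4}$ equal to $n_{2}$, while the degree on $H_{3}$ becomes $n_{6}+n_{4}-1$ and the degree on $H_{6}$ stays $n_{2}+n_{3}$; hence its three edge-types have degree sums $n-n_{3}$, $n$, $n-1$ and multiplicities $n_{2}$, $n_{2}(n_{6}+n_{4}-1)$, $n_{3}(n_{6}+n_{4}-1)$.

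Subtracting, the coefficient of $\sqrt{1-2/(n-n_{3})}$ is $n_{2}(1-n_{4})$ and that of $\sqrt{1-2/n}$ is $n_{2}(n_{4}-1)$, so since $n-n_{3}<n$ these combine into $n_{2}(n_{4}-1)\bigl(\sqrt{1-2/n}-\sqrt{1-2/(n-n_{3})}\bigr)\ge 0$ (using $n_{4}\ge 2$ and $n_{2}=\kappa\ge 1$). What remains is $n_{3}\bigl[(n_{6}+n_{4}-1)\sqrt{1-2/(n-1)}-n_{6}\sqrt{1-2/(n-n_{4})}\bigr]$. Because $n_{4}\ge 2$ we have $n_{6}+n_{4}-1>n_{6}\ge 1$ and $n-1>n-n_{4}=n_{2}+n_{3}+n_{6}\ge 3$, hence $\sqrt{1-2/(n-1)}>\sqrt{1-2/(n-n_{4})}>0$; multiplying these two strict inequalities between positive numbers gives $(n_{6}+n_{4}-1)\sqrt{1-2/(n-1)}>n_{6}\sqrt{1-2/(n-n_{4})}$, so the bracket, and hence the whole remaining term (as $n_{3}\ge 1$), is strictly positive. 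Adding the two groups yields $\overline{\mathcal{K}}[0,n_{2},n_{3},1,0,n_{6}+n_{4}-1]-\overline{\mathcal{K}}[0,n_{2},n_{3},n_{4},0,n_{6}]>0$, which is the claim.

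There is no serious obstacle here. The only points requiring attention are the degree bookkeeping — in particular the observation that the redistribution fixes the degrees on $H_{2}$ and $H_{4}$, so that only the $H_{2}$-edge multiplicities and the $H_{3}$-$H_{6}$ contribution change — and the verification that $n-n_{4}>2$, which legitimizes the monotonicity comparison of the two square roots; both follow at once from $n_{2}=\kappa\ge 1$, $n_{3}\ge 1$, $n_{6}\ge n_{2}$, and $n_{4}\ge 2$.
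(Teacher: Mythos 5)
Your proof is correct and follows essentially the same route as the paper's: both write out the $\mathcal{ABS}$ value of each graph edge-type by edge-type (your degree sums and multiplicities agree exactly with the paper's equations (\ref{beq4}) and (\ref{beq5})), subtract, and conclude by the monotonicity of $x\mapsto\sqrt{1-2/(n-x)}$. The only cosmetic difference is that you regroup the $H_3$--$H_6$ contribution as $n_3\bigl[(n_6+n_4-1)\sqrt{1-2/(n-1)}-n_6\sqrt{1-2/(n-n_4)}\bigr]$ and verify strict positivity directly, while the paper splits off $n_3 n_6\bigl(\sqrt{1-2/(n-1)}-\sqrt{1-2/(n-n_4)}\bigr)\ge 0$ and keeps the surplus $n_3(n_4-1)\sqrt{1-2/(n-1)}$ --- the same quantity, decomposed slightly differently.
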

\begin{proof}
Let $n=n_{2}+n_{3}+n_{4}+n_{6}$. Then
\begin{align}\label{beq4}\overline{\mathcal{K}}[0,n_{2},n_{3},n_{4},0,n_{6}]&= n_{2}(n_{4}-1)\sqrt{1-\dfrac{2}{n-n_{3}}}+n_{2}\sqrt{1-\dfrac{2}{n-n_{3}}}+n_{2}n_{6}\sqrt{1-\dfrac{2}{n}}\notag\\[2mm]&+n_{3}n_{6}\sqrt{1-\dfrac{2}{n-n_{4}}}
\end{align}
 and
\begin{align}\label{beq5}\overline{\mathcal{K}}[0,n_{2},n_{3},1,0,n_{6}+n_{4}-1]&=n_{2}\sqrt{1-\dfrac{2}{n-n_{3}}}+n_{2}n_{6}\sqrt{1-\dfrac{2}{n}}+n_{2}(n_{4}-1)\sqrt{1-\dfrac{2}{n}}\notag\\[2mm]&+n_{3}n_{6}\sqrt{1-\dfrac{2}{n-1}}+n_{3}(n_{4}-1)\sqrt{1-\dfrac{2}{n-1}}.\end{align} 
From equations (\ref{beq4}) and (\ref{beq5}), and since $\sqrt{1-\dfrac{2}{n-x}}\ge \sqrt{1-\dfrac{2}{n-y}}$ for $x\le y$, we get
$\overline{\mathcal{K}}[0,n_{2},n_{3},1,0,n_{6}+n_{4}-1]-\overline{\mathcal{K}}[0,n_{2},n_{3},n_{4},0,n_{6}]\ge n_{3}(n_{4}-1)\sqrt{1-\dfrac{2}{n-1}}\ge 0.$	
\end{proof}
The following lemma can be proved using an argument similar to that of Lemma \ref{bl2}.
\begin{lemma}\label{fl3}$\overline{\mathcal{K}}[n_{1},0,n_{3},0,n_{5},n_{6}]<\overline{\mathcal{K}}[1,0,n_{3}+n_{1}-1,0,n_{5},n_{6}]$, where $n_{6}\ge 1$, $n_{1}\ge 2$ $n_{5}=\kappa$ and $n_{3}\ge n_{5}$.\end{lemma}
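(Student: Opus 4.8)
\medskip
\noindent The plan is to imitate the proof of Lemma~\ref{bl2} almost verbatim; the only new point is to identify which joins survive once $n_{2}=n_{4}=0$. In $\overline{\mathcal{K}}[n_{1},0,n_{3},0,n_{5},n_{6}]$ the only edge classes are $H_{1}$--$H_{5}$, $H_{3}$--$H_{5}$ and $H_{3}$--$H_{6}$, and every vertex of $H_{1}$ has degree $n_{5}$, of $H_{3}$ degree $n_{5}+n_{6}$, of $H_{5}$ degree $n_{1}+n_{3}$, and of $H_{6}$ degree $n_{3}$. With $n=n_{1}+n_{3}+n_{5}+n_{6}$ this gives
\[
\overline{\mathcal{K}}[n_{1},0,n_{3},0,n_{5},n_{6}]=n_{1}n_{5}\sqrt{1-\dfrac{2}{n-n_{6}}}+n_{3}n_{5}\sqrt{1-\dfrac{2}{n}}+n_{3}n_{6}\sqrt{1-\dfrac{2}{n-n_{1}}},
\]
and, since the transformation merely relocates $n_{1}-1$ vertices from $H_{1}$ to $H_{3}$ (so the order is unchanged),
\[
\overline{\mathcal{K}}[1,0,n_{3}+n_{1}-1,0,n_{5},n_{6}]=n_{5}\sqrt{1-\dfrac{2}{n-n_{6}}}+(n_{1}+n_{3}-1)n_{5}\sqrt{1-\dfrac{2}{n}}+(n_{1}+n_{3}-1)n_{6}\sqrt{1-\dfrac{2}{n-1}}.
\]

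Next I would subtract and split the difference into the contribution of the edges meeting $H_{5}$ and that of the edges meeting $H_{6}$. The first collapses to $(n_{1}-1)n_{5}\bigl(\sqrt{1-\tfrac{2}{n}}-\sqrt{1-\tfrac{2}{n-n_{6}}}\bigr)$, which is nonnegative because $n_{1}\ge 2$ and $\sqrt{1-2/(n-x)}\ge\sqrt{1-2/(n-y)}$ for $x\le y$ --- exactly the monotonicity used in Lemma~\ref{bl2}. The second equals $(n_{1}+n_{3}-1)n_{6}\sqrt{1-\tfrac{2}{n-1}}-n_{3}n_{6}\sqrt{1-\tfrac{2}{n-n_{1}}}$; since $n_{1}\ge 2$ we have $n-1\ge n-n_{1}$, so replacing $\sqrt{1-2/(n-1)}$ by the smaller quantity $\sqrt{1-2/(n-n_{1})}$ only decreases it, and it is therefore at least $(n_{1}-1)n_{6}\sqrt{1-\tfrac{2}{n-n_{1}}}$.

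Finally I would verify that this last bound is strictly positive: $n_{1}-1\ge 1$, $n_{6}\ge 1$, and $n-n_{1}=n_{3}+n_{5}+n_{6}\ge 2n_{5}+1\ge 3$ (using $n_{3}\ge n_{5}=\kappa\ge 1$ and $n_{6}\ge 1$), so $\sqrt{1-2/(n-n_{1})}>0$. Summing the nonnegative $H_{5}$-part and the positive $H_{6}$-part gives $\overline{\mathcal{K}}[1,0,n_{3}+n_{1}-1,0,n_{5},n_{6}]-\overline{\mathcal{K}}[n_{1},0,n_{3},0,n_{5},n_{6}]>0$, as required. I anticipate no real obstacle: with only three edge classes the bookkeeping is even lighter than in Lemma~\ref{bl2}, and the one point needing a moment's care is confirming $n-n_{1}\ge 3$, which is what makes the inequality strict rather than merely non-strict.
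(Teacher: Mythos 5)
Your proposal is correct and follows exactly the route the paper intends: the paper gives no explicit proof of Lemma~\ref{fl3}, stating only that it is proved "using an argument similar to that of Lemma~\ref{bl2}," and your computation of the two $\mathcal{ABS}$ values, the split into the $H_5$- and $H_6$-edge contributions, and the use of the monotonicity of $\sqrt{1-2/(n-x)}$ is precisely that argument. Your extra care in verifying $n-n_1\ge 3$ to get strictness is a welcome touch the paper's sketch omits.
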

\begin{figure}[H]
~~~~~~~~~~~~~~~~~~~~~~~~~~~~~~\includegraphics[width=0.35\textwidth]{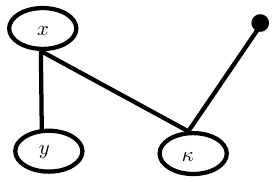}
\caption{Graph $\overline{\mathcal{K}}_{\kappa}[x,y]$}
\label{KXY}
\end{figure}
Let $\overline{\mathcal{K}}_{\kappa}[x,y]=\overline{\mathcal{K}}[x,0,1,y,\kappa,0]$ (see Fig~\ref{KXY}), where $x\ge1$, $y\ge 1$ and $x+y+1+\kappa=n$. The following theorem follow from Remark \ref{fremark} and Lemmas \ref{bl2}, \ref{fl1} and \ref{fl3}.
\begin{theorem}\label{ffffft1}
Let $G$ be a graph with maximum $\mathcal{ABS}$ index in $\mathcal{BG}_{n,k}$ and let $S$ be a minimum vertex cut set of $G$. If $G-S$ has a non-trivial component, then $G\cong K_{\kappa}[x,y]$ for some positive integers $x$ and $y$. 	
\end{theorem}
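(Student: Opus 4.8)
The plan is to start from Remark~\ref{fremark}, which already pins the maximizer $G$ down to one of the three families (i), (ii), (iii), and to show that inside each family every member other than a graph of the form $\overline{\mathcal{K}}_{\kappa}[x,y]$ is strictly beaten in $\mathcal{ABS}$ by some $\overline{\mathcal{K}}_{\kappa}[x,y]$ lying in $\mathcal{BG}_{n,\kappa}$; since $G$ is the maximizer this forces $G\cong\overline{\mathcal{K}}_{\kappa}[x,y]$. Two elementary isomorphisms of the construction will be used throughout: $\overline{\mathcal{K}}[n_{1},n_{2},n_{3},n_{4},n_{5},n_{6}]\cong\overline{\mathcal{K}}[n_{3},n_{2},n_{1},n_{6},n_{5},n_{4}]$ (interchanging the roles of $H_{1},H_{3}$ and of $H_{4},H_{6}$) and $\overline{\mathcal{K}}[n_{1},n_{2},n_{3},n_{4},n_{5},n_{6}]\cong\overline{\mathcal{K}}[n_{4},n_{5},n_{6},n_{1},n_{2},n_{3}]$ (interchanging the two parts of the bipartition). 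Composing them one checks that $\overline{\mathcal{K}}_{\kappa}[x,y]=\overline{\mathcal{K}}[x,0,1,y,\kappa,0]\cong\overline{\mathcal{K}}[1,0,x,0,\kappa,y]\cong\overline{\mathcal{K}}[0,\kappa,y,1,0,x]$, which is exactly the form of the right-hand sides of Lemmas~\ref{fl1}, \ref{bl2} and~\ref{fl3}. In each application I will also run the short bookkeeping check that the dominating graph is a genuine element of $\mathcal{BG}_{n,\kappa}$: it must equal $\overline{\mathcal{K}}_{\kappa}[x,y]$ with $y\ge 1$, $x+y+1+\kappa=n$, and $x\ge\kappa$, this last inequality being what guarantees the vertex connectivity stays equal to $\kappa$ (otherwise a vertex of the $H_{4}$-part would have degree $x<\kappa$). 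All three conditions follow from the inequalities $n_{4},n_{6}\ge n_{2}$ and $n_{1},n_{3}\ge n_{5}$ in Remark~\ref{fremark} together with the hypothesis of the lemma being used.

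Families (ii) and (iii) are immediate. For $G=\overline{\mathcal{K}}[0,\kappa,n_{3},n_{4},0,n_{6}]$ in family (ii) (so $n_{3},n_{4}\ge 1$, $n_{6}\ge\kappa$), if $n_{4}\ge 2$ then Lemma~\ref{bl2} gives $\mathcal{ABS}(G)<\mathcal{ABS}\big(\overline{\mathcal{K}}_{\kappa}[n_{6}+n_{4}-1,n_{3}]\big)$, a member of $\mathcal{BG}_{n,\kappa}$, contradicting maximality; hence $n_{4}=1$ and $G\cong\overline{\mathcal{K}}_{\kappa}[n_{6},n_{3}]$. Family (iii) is symmetric: Lemma~\ref{fl3} forces the first parameter down to $1$, and then $G\cong\overline{\mathcal{K}}_{\kappa}[n_{3},n_{6}]$.

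Family (i) is the delicate case, and I expect it to be the main obstacle. Write $G=\overline{\mathcal{K}}[n_{1},n_{2},n_{3},n_{4},n_{5},n_{6}]$ with $n_{1},n_{3},n_{4},n_{6}\ge 1$, $n_{2}+n_{5}=\kappa$, $n_{1},n_{3}\ge n_{5}$, $n_{4},n_{6}\ge n_{2}$. If $n_{3}\ge 2$, Lemma~\ref{fl1} exhibits a strictly larger $\overline{\mathcal{K}}_{\kappa}[\,\cdot\,,\cdot\,]$ in $\mathcal{BG}_{n,\kappa}$; applying it after the $H_{1}\leftrightarrow H_{3}$ isomorphism likewise excludes $n_{1}\ge 2$, so $n_{1}=n_{3}=1$. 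Now transport $G$ by the bipartition-swapping isomorphism to $\overline{\mathcal{K}}[n_{4},n_{5},n_{6},1,n_{2},1]$, in which $n_{6}$ now plays the role of the third parameter; using $n_{5}\le n_{3}=1$ and $n_{4}\ge n_{2}$ one checks that Lemma~\ref{fl1} applies to this form as soon as $n_{6}\ge 2$, producing the dominating graph $\overline{\mathcal{K}}_{\kappa}[\,n_{4}+n_{5}+n_{6}-1,\,2-n_{5}\,]\in\mathcal{BG}_{n,\kappa}$, a contradiction; the remaining isomorphism excludes $n_{4}\ge 2$ in the same way, so $n_{4}=n_{6}=1$. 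But then $n_{4},n_{6}\ge n_{2}$ forces $n_{2}\le 1$, and together with $n_{5}\le n_{3}=1$ this gives $\kappa=n_{2}+n_{5}\le 2$ and $n=\sum_{i}n_{i}=\kappa+4$; the only survivors are $\overline{\mathcal{K}}[1,1,1,1,0,1]$, $\overline{\mathcal{K}}[1,0,1,1,1,1]$ (both $n=5$, $\kappa=1$) and $\overline{\mathcal{K}}[1,1,1,1,1,1]$ ($n=6$, $\kappa=2$), and a direct computation shows each of them is strictly beaten by $\overline{\mathcal{K}}_{\kappa}[2,1]$ on the same vertex set, so none is the maximizer. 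This exhausts family (i) and finishes the proof.

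The step that will cost real effort is precisely this last reduction inside family (i): Lemma~\ref{fl1} as stated needs its third block to have at least two vertices, so getting past $n_{3}=1$ requires cycling the parameters through the two isomorphisms of $\overline{\mathcal{K}}[\cdots]$ so that a different coordinate plays the role of the third parameter, re-verifying each time that the intermediate graph still lies in $\mathcal{BG}_{n,\kappa}$, and finally clearing by hand the handful of small graphs ($n\le 6$) on which none of the lemmas is applicable. Everything else is routine bookkeeping on top of Lemmas~\ref{bl2}, \ref{fl1} and~\ref{fl3}.
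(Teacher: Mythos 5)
Your proposal is correct and follows the route the paper intends---reduce to the three families of Remark~\ref{fremark} and dominate every non-$\overline{\mathcal{K}}_{\kappa}[x,y]$ member via Lemmas~\ref{fl1}, \ref{bl2} and \ref{fl3}---but the paper supplies no actual derivation (it asserts only that the theorem ``follows from'' the remark and the lemmas), and your write-up contains genuinely missing content. You correctly observe that Lemma~\ref{fl1} requires $n_3\ge 2$, so even after invoking the $H_1\leftrightarrow H_3$, $H_4\leftrightarrow H_6$ symmetry, the subcase $n_1=n_3=1$ of family (i) is not covered by the lemmas as stated; your transport through the bipartition-swapping isomorphism so that $n_6$ (and then $n_4$) plays the role of the third coordinate, followed by the residual observation that $n_1=n_3=n_4=n_6=1$ forces $n_2,n_5\le 1$, hence $\kappa\le 2$ and $n=\kappa+4\le 6$, with only three small graphs surviving, is a correct and necessary patch. (I checked the survivors: the two $n=5$ graphs are both $P_5$, with $\mathcal{ABS}=2/\sqrt3+\sqrt2\approx 2.57$ versus $\mathcal{ABS}(\overline{\mathcal{K}}_1[2,1])=3/\sqrt2+2\sqrt{3/5}\approx 3.67$, and the $n=6$, $\kappa=2$ graph has $\mathcal{ABS}\approx 5.33$ versus $\mathcal{ABS}(\overline{\mathcal{K}}_2[2,1])\approx 6.36$, so all three are indeed eliminated.) The bookkeeping you insert---that each dominating graph genuinely lies in $\mathcal{BG}_{n,\kappa}$ because the degree $x$ of its $H_4$-vertices satisfies $x\ge\kappa$, which follows from $n_1,n_3\ge n_5$ and $n_4,n_6\ge n_2$ together with the relevant lemma's hypothesis---is likewise needed and absent from the paper. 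In short: same strategy, but your version is the proof the paper should have written, and it exposes that the theorem does not follow from the three lemmas quite as immediately as the paper suggests.
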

We now proceed to determine the values of $x$ and $y$ for which $\mathcal{ABS}(K_{\kappa}[x,y])$ is maximum. 
\begin{lemma}\label{fil1}
$\overline{\mathcal{K}}_{\kappa}[x+1,y-1]> 	\overline{\mathcal{K}}_{\kappa}[x,y]$, where $x+y+\kappa+1=n$ and $y-x-1+k\ge0$.
\end{lemma}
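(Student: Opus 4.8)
The plan is to compute the difference $\overline{\mathcal{K}}_{\kappa}[x+1,y-1]-\overline{\mathcal{K}}_{\kappa}[x,y]$ directly from the edge-weight formula for $\overline{\mathcal{K}}[n_1,\dots,n_6]$ specialized to $\overline{\mathcal{K}}[x,0,1,y,\kappa,0]$. Writing $f(t)=\sqrt{1-\tfrac{2}{t}}$, the graph $\overline{\mathcal{K}}_{\kappa}[x,y]$ has three groups of edges: $xy$ edges between $H_1$ and $H_4$ with endpoint degree sum $n-1-\kappa$ (i.e. $n-n_3-n_6$ with $n_3=1,n_6=0$), $x\kappa$ edges between $H_1$ and $H_5$ with degree sum $n$, and $\kappa$ edges between $H_3$ (the single vertex) and $H_5$ with degree sum $n-y$. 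So $\mathcal{ABS}(\overline{\mathcal{K}}_{\kappa}[x,y]) = xy\,f(n-1-\kappa) + x\kappa\,f(n) + \kappa\,f(n-y)$. Replacing $(x,y)$ by $(x+1,y-1)$ changes $xy$ to $(x+1)(y-1)=xy+y-x-1$, changes $x\kappa$ to $(x+1)\kappa$, and changes $n-y$ to $n-y+1$; note $n-1-\kappa=x+y$ is unchanged. Hence
\begin{align*}
\overline{\mathcal{K}}_{\kappa}[x+1,y-1]-\overline{\mathcal{K}}_{\kappa}[x,y] = (y-x-1)\,f(x+y) + \kappa\,f(n) - \kappa\bigl(f(n-y)-f(n-y+1)\bigr).
\end{align*}

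The next step is to show this expression is positive under the hypotheses $x,y\ge 1$, $x+y+\kappa+1=n$, and $y-x-1+\kappa\ge 0$. The term $\kappa f(n)$ is clearly positive. The term $(y-x-1)f(x+y)$ could be negative (when $x\ge y$), so I would bound it below by $-(x+1-y)f(x+y)$ and absorb it using $\kappa f(n)$: since $f$ is increasing and $n>x+y$, we have $\kappa f(n)>\kappa f(x+y)\ge (x+1-y)f(x+y)$ precisely when $\kappa\ge x+1-y$, which is exactly the hypothesis $y-x-1+\kappa\ge 0$. So $(y-x-1)f(x+y)+\kappa f(n)>0$, and it remains to handle the last, genuinely subtractive piece $-\kappa\bigl(f(n-y)-f(n-y+1)\bigr)$. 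Here $f(n-y+1)-f(n-y)>0$ since $f$ is increasing, so this term is negative and I cannot simply drop it; I need the positive surplus from the first two terms to dominate it.

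To make the domination quantitative, I would use the fact that consecutive differences $f(m+1)-f(m)$ are small and decreasing in $m$ (this is the increasing-ness of $\zeta(x)=f(x-t_i)-f(x-t_i+1)$ type statement already invoked in Theorem~\ref{turan}, or simply a direct first-derivative/convexity estimate showing $f(m+1)-f(m)\le f'(m)$ is $O(m^{-3/2})$). Concretely, $f(n-y)-f(n-y+1)<0$ in absolute value is at most $f(n)-f(n-1)$, and more usefully one can bound $\kappa\bigl(f(n-y+1)-f(n-y)\bigr)$ against the slack $\kappa\bigl(f(n)-f(x+y)\bigr)=\kappa\bigl(f(n)-f(n-1-\kappa)\bigr)$ that was not used up in the previous paragraph when the hypothesis inequality is not tight. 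When $y-x-1+\kappa>0$ strictly, there is extra positive slack; when $y-x-1+\kappa=0$ exactly, one has $\kappa=x+1-y$ and $n-1-\kappa=x+y$, so the first two terms give exactly $\kappa\bigl(f(n)-f(n-1-\kappa)\bigr)$, and I must show $f(n)-f(n-1-\kappa)>f(n-y)-f(n-y+1)$; since the left side telescopes over $\kappa+1\ge 1$ unit steps all lying at arguments $\ge n-1-\kappa = x+y \ge 2$ while the right side is a single unit step at argument $n-y\ge x+1\ge x+y$ (using $y\le 1$ is false in general — here one uses $n-y = x+1+\kappa \ge x+y$ iff $\kappa\ge y-1$, which holds since $\kappa\ge x+1-y\ge 2-y$ hmm), monotonicity of the step-size function $m\mapsto f(m)-f(m-1)$ being decreasing finishes it. The main obstacle is precisely this boundary case analysis — bookkeeping which unit steps of $f$ sit at which arguments so that the decreasing-step-size property yields the strict inequality — and I expect the cleanest route is to phrase everything in terms of the auxiliary increasing function $\zeta_1(x)=f(2n-x)-f(2n-x-1)$ (already defined in the proof of Theorem~\ref{KPAR}) or an analogous $g(m)=f(m)-f(m-1)$ and invoke its monotonicity, rather than grinding through the radicals.
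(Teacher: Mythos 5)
Your setup (identify the three groups of edges of $\overline{\mathcal{K}}_{\kappa}[x,y]$ and take the difference term by term) is the same as the paper's, but two errors derail the execution. First, the degree sum on an $H_1$--$H_4$ edge is $(y+\kappa)+x=n-1$, not $n-1-\kappa$: a vertex of $H_1$ is joined to all of $H_4$ \emph{and} $H_5$, so its degree is $y+\kappa$, not $y$. (Your own parenthetical $n-n_3-n_6=n-1$ already contradicts the value $n-1-\kappa$ you then use.) This slip does not change the shape of your argument, but the weight on those edges should be $f(n-1)$ throughout.

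Second, and more seriously, you misread the sign of the third term. You correctly write it as $-\kappa\bigl(f(n-y)-f(n-y+1)\bigr)$ and correctly note that $f(n-y+1)-f(n-y)>0$, but then conclude the term is negative; in fact $-\kappa\bigl(f(n-y)-f(n-y+1)\bigr)=\kappa\bigl(f(n-y+1)-f(n-y)\bigr)>0$. Concretely, the $\kappa$ edges between the singleton class $H_3$ and the cut set $H_5$ have degree sum $\kappa+(x+1)=n-y$ in $\overline{\mathcal{K}}_\kappa[x,y]$ and the \emph{larger} sum $n-y+1$ in $\overline{\mathcal{K}}_\kappa[x+1,y-1]$, so this term helps you and can simply be dropped. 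Because of the sign confusion, your entire final paragraph attempts to ``dominate'' a term that needs no dominating, and that paragraph is in any case not a proof: it contains explicitly unresolved case analysis and never verifies the claimed comparison of unit steps. Once the sign is fixed, your middle paragraph already finishes the lemma exactly as the paper does: the difference equals $(y-x-1)f(n-1)+\kappa f(n)+\kappa\bigl(f(n-y+1)-f(n-y)\bigr)>(y-x-1)f(n-1)+\kappa f(n-1)=(y-x-1+\kappa)f(n-1)\ge 0$, using $f(n)>f(n-1)$ and the hypothesis $y-x-1+\kappa\ge0$.
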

\begin{proof}
$\overline{\mathcal{K}}_{\kappa}[x,y]=xy\sqrt{1-\dfrac{2}{n-1}}+xk\sqrt{1-\dfrac{2}{n}}+k\sqrt{1-\dfrac{2}{n-y}}$\\[2mm]	
and\\[2mm] $\overline{\mathcal{K}}_{\kappa}[x+1,y-1]=(xy-x+y-1)\sqrt{1-\dfrac{2}{n-1}}+(x+1)k\sqrt{1-\dfrac{2}{n}}+k\sqrt{1-\dfrac{2}{n-y+1}}$.\\[2mm]	
Therefore,\\[2mm]
$\overline{\mathcal{K}}_{\kappa}[x+1,y-1]-\overline{\mathcal{K}}_{\kappa}[x,y]> (y-x-1+k)\sqrt{1-\dfrac{2}{n-1}}\ge 0$.
\end{proof}
\begin{lemma}\label{fil2} For $n$ even ($n\ge 8$)  and $0\le c\le \dfrac{n-2k-6}{2}$,\\ 
$$\overline{\mathcal{K}}_{\kappa}\left[\dfrac{n}{2}+c,\dfrac{n-2k-2}{2}-c\right]>\overline{\mathcal{K}}_{\kappa}\left[\dfrac{n}{2}+c+1,\dfrac{n-2k-2}{2}-c-1\right].$$
\end{lemma}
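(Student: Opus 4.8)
The plan is to expand both quantities with the closed form for $\overline{\mathcal{K}}_{\kappa}[x,y]$ established in the proof of Lemma~\ref{fil1}, namely
\[\overline{\mathcal{K}}_{\kappa}[x,y]=xy\sqrt{1-\dfrac{2}{n-1}}+x\kappa\sqrt{1-\dfrac{2}{n}}+\kappa\sqrt{1-\dfrac{2}{n-y}},\]
and to show the relevant difference is strictly positive. Put $x=\frac{n}{2}+c$ and $y=\frac{n-2\kappa-2}{2}-c$, so that $x+y+\kappa+1=n$; from the hypotheses $n\ge 8$ and $0\le c\le\frac{n-2\kappa-6}{2}$ one reads off every side condition that will be needed: $y\ge 2$ (so that $\overline{\mathcal{K}}_{\kappa}[x+1,y-1]$ is a legitimate member of the family), $\kappa\le\frac{n-6}{2}$, and $n-y=\frac{n}{2}+\kappa+1+c\ge 5$. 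Using $xy-(x+1)(y-1)=x-y+1=2c+\kappa+2$, a direct computation collapses the difference to
\[\overline{\mathcal{K}}_{\kappa}[x,y]-\overline{\mathcal{K}}_{\kappa}[x+1,y-1]=(2c+\kappa+2)\sqrt{1-\dfrac{2}{n-1}}-\kappa\sqrt{1-\dfrac{2}{n}}-\kappa\left(\sqrt{1-\dfrac{2}{n-y+1}}-\sqrt{1-\dfrac{2}{n-y}}\right),\]
and it remains to prove this is positive.

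Next I would peel off the part not involving $\kappa$ by writing $(2c+\kappa+2)\sqrt{1-\frac{2}{n-1}}=(2c+2)\sqrt{1-\frac{2}{n-1}}+\kappa\sqrt{1-\frac{2}{n-1}}$. Setting $A:=\sqrt{1-\frac{2}{n}}-\sqrt{1-\frac{2}{n-1}}>0$ and $B:=\sqrt{1-\frac{2}{n-y+1}}-\sqrt{1-\frac{2}{n-y}}>0$, the required inequality becomes exactly
\[(2c+2)\sqrt{1-\dfrac{2}{n-1}}>\kappa A+\kappa B.\]
Since $c\ge 0$ and $n\ge 8$, the left-hand side is at least $2\sqrt{1-\frac{2}{7}}=2\sqrt{5/7}>1.69$, so the whole problem reduces to showing $\kappa A$ and $\kappa B$ are each small, say $\kappa(A+B)<1.69$.

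The substance of the argument is this last estimate. Let $f(t)=\sqrt{1-2/t}$; its derivative $f'(t)=t^{-3/2}(t-2)^{-1/2}$ is decreasing for $t>2$, so $f$ is concave there and hence $A=f(n)-f(n-1)\le f'(n-1)$ and $B=f(n-y+1)-f(n-y)\le f'(n-y)$. Because $\kappa\le\frac{n-6}{2}\le n-1$ and $\kappa\le\frac{n}{2}+\kappa+1+c=n-y$, this gives
\[\kappa A\le (n-1)f'(n-1)=\dfrac{1}{\sqrt{(n-1)(n-3)}},\qquad \kappa B\le (n-y)f'(n-y)=\dfrac{1}{\sqrt{(n-y)(n-y-2)}}.\]
Using $n-1\ge 7$ and $n-y\ge 5$ for $n\ge 8$, we obtain $\kappa A\le 1/\sqrt{35}$ and $\kappa B\le 1/\sqrt{15}$, so $\kappa(A+B)<1/\sqrt{35}+1/\sqrt{15}<0.43<1.69$, and the lemma follows. (Rationalizing via $\sqrt a-\sqrt b=\frac{a-b}{\sqrt a+\sqrt b}$ gives bounds of the same strength and could replace the concavity step.)

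There is no genuine obstacle here: the only things requiring care are the bookkeeping in the algebraic collapse of the difference and checking that $n\ge 8$ together with $0\le c\le\frac{n-2\kappa-6}{2}$ really do deliver $y\ge 2$, $n-y\ge 5$, $\kappa\le n-1$, and $\kappa\le n-y$ — each a one-line verification. The final numerical comparison ($2\sqrt{5/7}\approx 1.69$ against a quantity below $0.43$) carries a large margin, so no sharpening of the constants is necessary.
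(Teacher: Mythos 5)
Your proof is correct, and it takes a genuinely different --- and cleaner --- route than the paper's. Both arguments start from the same algebraic collapse: the difference equals $f(c)=(2c+\kappa+2)\sqrt{1-\tfrac{2}{n-1}}-\kappa\sqrt{1-\tfrac{2}{n}}+\kappa\left(\sqrt{1-\tfrac{2}{n-y}}-\sqrt{1-\tfrac{2}{n-y+1}}\right)$. But the paper then proceeds in two stages: it first argues via a derivative computation that $f$ is increasing in $c$, reducing the claim to $f(0)>0$, and then proves $f(0)>0$ by isolating two auxiliary inequalities (its Claims), each verified by squaring and checking that a large polynomial in $n$ and $k$ is positive. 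You skip both stages: after splitting off $(2c+2)\sqrt{1-\tfrac{2}{n-1}}\ge 2\sqrt{5/7}>1.69$, you bound the two increments $A$ and $B$ uniformly in $c$ via concavity of $t\mapsto\sqrt{1-2/t}$, getting $\kappa A\le 1/\sqrt{(n-1)(n-3)}\le 1/\sqrt{35}$ and $\kappa B\le 1/\sqrt{(n-y)(n-y-2)}\le 1/\sqrt{15}$ from $\kappa\le n-1$ and $\kappa\le n-y$, so that $\kappa(A+B)<0.43$. Your approach buys the elimination of the monotonicity step and of the opaque polynomial positivity check (which is the delicate point of the paper's own proof, since squaring an inequality requires sign control that the paper glosses over), at no cost in generality; the side-condition bookkeeping you flag ($y\ge 2$, $n-y\ge 5$, $\kappa\le\tfrac{n-6}{2}$) is accurate and all that is needed.
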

\begin{proof}
	Let $f(c)=
	\overline{\mathcal{K}}_{\kappa}\left[\dfrac{n}{2}+c,\dfrac{n-2k-2}{2}-c\right]-\overline{\mathcal{K}}_{\kappa}\left[\dfrac{n}{2}+c+1,\dfrac{n-2k-2}{2}-c-1\right].$ Then $f(c)=$ \begin{align*}2\,\sqrt {{\dfrac {n-3}{n-1}}}c+\sqrt {{\dfrac {n-3}{n-1}}}k-k\sqrt {{
				\dfrac {n-2}{n}}}+k\sqrt {{\dfrac {n+2\,k-2+2\,c}{n+2\,k+2+2\,c}}}-k
		\sqrt {{\dfrac {n+2\,k+2\,c}{n+2\,k+4+2\,c}}}+2\,\sqrt {{\dfrac {n-3}{n-1}}}.\end{align*}
	By first derivative test, it is easy to check that the function $f(c)$ is increasing, and thus $f(c)\ge f(0)$.\\[2mm]
Consider
$ f(0)=\overline{\mathcal{K}}_{\kappa}\left[\dfrac{n}{2},\dfrac{n-2k-2}{2}\right]-\overline{\mathcal{K}}_{\kappa}\left[\dfrac{n}{2}+1,\dfrac{n-2k-2}{2}-1\right]=$\begin{align}\label{ie0}k\sqrt {{\dfrac {n-3}{n-1}}} - k\sqrt {{\dfrac {n-2}{n}}}+k\sqrt {{\dfrac {
			n+2\,k-2}{n+2\,k+2}}}-k\sqrt {{\dfrac {n+2\,k}{n+2\,k+4}}}+2\,\sqrt {{
		\dfrac {n-3}{n-1}}}.\end{align}
\noindent
Claim 1:\begin{align}\label{ie1}\sqrt {{
		\dfrac {n-3}{n-1}}}>k\sqrt {{\dfrac {n+2\,k}{n+2\,k+4}}}-k\sqrt {{\dfrac {
			n+2\,k-2}{n+2\,k+2}}}.\end{align}
Squaring both sides of inequality (\ref{ie1}), we get\begin{align*}
2\,{k}^{2}\sqrt {{\dfrac {n+2\,k-2}{n+2\,k+2}}}\sqrt {{\dfrac {n+2\,k}{n
			+2\,k+4}}}>2\,{k}^{2}-{\dfrac {{4k}^{2}}{n+2\,k+2}}-{\dfrac {{4k}^{2}}{n+2\,k+4
		}}-1+\dfrac{2}{n-1}.
\end{align*}	
Now, $\left(2\,{k}^{2}\sqrt {{\dfrac {n+2\,k-2}{n+2\,k+2}}}\sqrt {{\dfrac {n+2\,k}{n
			+2\,k+4}}}\right)^2-\left(2\,{k}^{2}-{\dfrac {{4k}^{2}}{n+2\,k+2}}-{\dfrac {{4k}^{2}}{n+2\,k+4
		}}-1+\dfrac{2}{n-1}\right)^2$\\\\[2mm]
	$=\dfrac{1}{(n+2k+2)^2(n+2k+4)^2(n-1)^2}\times\\[2mm]\Big[\left( 64\,{n}^{2}-256\,n+192 \right) {k}^{6}+ \left( 128\,{n}^{3}-
	256\,{n}^{2}-640\,n+768 \right) {k}^{5}+ \left( 96\,{n}^{4}-1072\,{n}^
	{2}+352\,n+560 \right) {k}^{4}+ \left( 32\,{n}^{5}+64\,{n}^{4}-448\,{n
	}^{3}-416\,{n}^{2}+1312\,n-1056 \right) {k}^{3}+ \left( 4\,{n}^{6}+16
	\,{n}^{5}-76\,{n}^{4}-192\,{n}^{3}+632\,{n}^{2}+368\,n\right.\\\left.-2256 \right) {k
	}^{2}+ \left( -8\,{n}^{5}-24\,{n}^{4}+152\,{n}^{3}+408\,{n}^{2}-720\,n
	-1728 \right) k-{n}^{6}-6\,{n}^{5}+11\,{n}^{4}+108\,{n}^{3}+44\,{n}^{2
	}-480\,n-576\Big]> 0.$ Thus Claim 1 is true.\\[2mm]					 	
In similar lines to Claim 1, we get,
 \begin{align}\label{ie2}\sqrt {{
 		\dfrac {n-3}{n-1}}}>k\sqrt {{\dfrac {n-2}{n}}}-k\sqrt {{\dfrac {n-3}{n-1}}}.
 	\end{align}
Thus from equations (\ref{ie0}), (\ref{ie1}) and (\ref{ie2}), $f(0)>0$. Hence $f(c)>0$. Proving the inequality.\\[2mm]
\end{proof}
The following lemma can be proved using an argument similar to that of Lemma \ref{fil2}.
\begin{lemma}\label{fil3} For $n$ odd $(n\ge 7)$ and $0\le c\le \dfrac{n-2k-5}{2}$, 
	$$\overline{\mathcal{K}}_{\kappa}\left[\dfrac{n-1}{2}+c,\dfrac{n-2k-1}{2}-c\right]>\overline{\mathcal{K}}_{\kappa}\left[\dfrac{n-1}{2}+c+1,\dfrac{n-2k-1}{2}-c-1\right].$$
\end{lemma}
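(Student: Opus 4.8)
\textbf{Proof proposal for Lemma~\ref{fil3}.} The plan is to run the argument of Lemma~\ref{fil2} with the bookkeeping adapted to odd $n$. Recall from Lemma~\ref{fil1} that, with $x+y+k+1=n$,
\[\overline{\mathcal{K}}_{\kappa}[x,y]=xy\sqrt{1-\tfrac{2}{n-1}}+xk\sqrt{1-\tfrac{2}{n}}+k\sqrt{1-\tfrac{2}{n-y}}.\]
Set $x=\tfrac{n-1}{2}+c$ and $y=\tfrac{n-2k-1}{2}-c$; since $n$ is odd and $0\le c\le\tfrac{n-2k-5}{2}$, both are integers with $x\ge 3$ and $y\ge 2$, so $\overline{\mathcal{K}}_{\kappa}[x,y]$ and $\overline{\mathcal{K}}_{\kappa}[x+1,y-1]$ are well defined. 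Using $(x+1)(y-1)=xy-(x-y+1)$, $x-y+1=k+2c+1$, and $n-y=\tfrac{n+2k+1}{2}+c$, I would set $g(c)=\overline{\mathcal{K}}_{\kappa}[\tfrac{n-1}{2}+c,\tfrac{n-2k-1}{2}-c]-\overline{\mathcal{K}}_{\kappa}[\tfrac{n-1}{2}+c+1,\tfrac{n-2k-1}{2}-c-1]$ and compute, exactly as in the even case,
\[g(c)=2c\sqrt{\tfrac{n-3}{n-1}}+\sqrt{\tfrac{n-3}{n-1}}-k\!\left(\sqrt{\tfrac{n-2}{n}}-\sqrt{\tfrac{n-3}{n-1}}\right)-k\!\left(\sqrt{\tfrac{n+2k-1+2c}{n+2k+3+2c}}-\sqrt{\tfrac{n+2k-3+2c}{n+2k+1+2c}}\right).\]

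Next, as in Lemma~\ref{fil2}, a first-derivative test shows $g$ is increasing on the stated range: the term $2c\sqrt{\tfrac{n-3}{n-1}}$ is increasing, and writing $s=n+2k+2c$ the last bracket equals $k\big(\sqrt{\tfrac{s-1}{s+3}}-\sqrt{\tfrac{s-3}{s+1}}\big)$, whose $s$-derivative $\tfrac{2}{(s+3)^{3/2}\sqrt{s-1}}-\tfrac{2}{(s+1)^{3/2}\sqrt{s-3}}$ is negative, so this bracket is decreasing and $-k(\cdots)$ is increasing in $c$. Hence it suffices to verify $g(0)>0$, i.e.
\[\sqrt{\tfrac{n-3}{n-1}}>k\!\left(\sqrt{\tfrac{n-2}{n}}-\sqrt{\tfrac{n-3}{n-1}}\right)+k\!\left(\sqrt{\tfrac{n+2k-1}{n+2k+3}}-\sqrt{\tfrac{n+2k-3}{n+2k+1}}\right).\]

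The only genuine deviation from Lemma~\ref{fil2} is here: in the even case the multiple of $\sqrt{\tfrac{n-3}{n-1}}$ left after cancellation is $2$, giving one full copy to dominate each of the two radical differences separately (Claim~1 and inequality~(\ref{ie2})); for odd $n$ only one copy survives, so I would instead prove the two \emph{sharpened} estimates
\[\tfrac12\sqrt{\tfrac{n-3}{n-1}}>k\!\left(\sqrt{\tfrac{n-2}{n}}-\sqrt{\tfrac{n-3}{n-1}}\right),\qquad \tfrac12\sqrt{\tfrac{n-3}{n-1}}>k\!\left(\sqrt{\tfrac{n+2k-1}{n+2k+3}}-\sqrt{\tfrac{n+2k-3}{n+2k+1}}\right),\]
each by the method of Lemma~\ref{fil2}: move one radical to the left, square, isolate the remaining radical, square again, and check positivity of the resulting polynomial in $n$ (coefficients polynomial in $k$) for all $n\ge 7$ and $1\le k\le\tfrac{n-5}{2}$. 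Adding the two estimates gives $g(0)>0$, hence $g(c)>0$ throughout, which is the claim.

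The main obstacle is this last polynomial positivity check — as in Lemma~\ref{fil2}, after clearing radicals the second sharpened inequality produces a degree-$6$ polynomial in $n$ (now with the constant $\tfrac14$ in place of $1$, since we square $\tfrac12\sqrt{\tfrac{n-3}{n-1}}$) whose positivity on the range must be confirmed; the available slack is large, so this is routine but tedious. If one wishes to sidestep it, the elementary bound $\sqrt a-\sqrt b=\tfrac{a-b}{\sqrt a+\sqrt b}\le\tfrac{a-b}{2\sqrt b}$ gives $k\big(\sqrt{\tfrac{n-2}{n}}-\sqrt{\tfrac{n-3}{n-1}}\big)\le\tfrac{k}{n\sqrt{(n-1)(n-3)}}$ and $k\big(\sqrt{\tfrac{n+2k-1}{n+2k+3}}-\sqrt{\tfrac{n+2k-3}{n+2k+1}}\big)\le\tfrac{4k}{(n+2k+3)\sqrt{(n+2k+1)(n+2k-3)}}$, and one checks directly that their sum is strictly below $\sqrt{\tfrac{n-3}{n-1}}$ for $n\ge 7$ and $k\le\tfrac{n-5}{2}$, proving $g(0)>0$ with no heavy computation.
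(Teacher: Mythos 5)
Your proposal is correct, and it follows the route the paper intends (the paper gives no proof of Lemma~\ref{fil3}, only the remark that it is ``similar to Lemma~\ref{fil2}''). The computation of $g(c)$ checks out: with $x=\tfrac{n-1}{2}+c$, $y=\tfrac{n-2k-1}{2}-c$ one gets $xy-(x+1)(y-1)=k+2c+1$ and $n-y=\tfrac{n+2k+1}{2}+c$, giving exactly your expression, and your monotonicity argument via $u(t)=\sqrt{1-4/t}$ is a cleaner justification of the ``first derivative test'' step than the paper offers. The genuinely useful part of your write-up is the observation that the even-case argument does \emph{not} transplant verbatim: after cancellation only $(k+1)\sqrt{\tfrac{n-3}{n-1}}$ survives rather than $(k+2)\sqrt{\tfrac{n-3}{n-1}}$, so the two auxiliary inequalities (Claim~1 and (\ref{ie2})) must each be sharpened to a bound by $\tfrac12\sqrt{\tfrac{n-3}{n-1}}$. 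Your elementary fallback does close this cleanly: the bound $\sqrt a-\sqrt b\le\tfrac{a-b}{2\sqrt b}$ gives the two displayed estimates, their sum is at most $\tfrac{5k}{n\sqrt{(n-1)(n-3)}}$ (since $n+2k>n$), and $5k<n(n-3)$ holds for all $n\ge 7$, $k\le\tfrac{n-5}{2}$, so $g(0)>0$ without any degree-six polynomial verification. This is arguably preferable to the paper's brute-force squaring even in the even case.
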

\begin{theorem}\label{fffffft2}
(i) If $n$ is even, then $\mathcal{K}_{\kappa}[x,y]\le \mathcal{K}_{\kappa}\left[\dfrac{n}{2},\dfrac{n-2k-2}{2}\right]$.	
Equality holds if and only if $x=\dfrac{n}{2}$ and $y=\dfrac{n-2k-2}{2}$.\\[2mm]
(ii) If $n$ is odd, then $\mathcal{K}_{\kappa}[x,y]\le \mathcal{K}_{\kappa}\left[\dfrac{n-1}{2},\dfrac{n-2k-1}{2}\right]$.	
Equality holds if and only if $x=\dfrac{n-1}{2}$ and $y=\dfrac{n-2k-1}{2}$.
\end{theorem}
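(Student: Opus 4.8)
The statement should fall out immediately from the three monotonicity estimates already established, Lemmas~\ref{fil1}, \ref{fil2} and~\ref{fil3}, via a ``climb‑then‑descend'' argument along the line $x+y=n-\kappa-1$ (recall $x,y\ge 1$, so $y$ is determined by $x$, and here $k=\kappa$; we write $\overline{\mathcal{K}}_{\kappa}[x,y]$, as in those lemmas, for the $\mathcal{ABS}$‑value denoted $\mathcal{K}_{\kappa}[x,y]$ in the theorem). The two parities are handled separately. Throughout one uses the natural feasibility condition $n\ge 2\kappa+4$ for $n$ even and $n\ge 2\kappa+3$ for $n$ odd; without it the $y$‑coordinate of the claimed maximizer would not be positive, so the theorem is implicitly restricted to this range.

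\smallskip\noindent\textbf{Even case (part (i)).} Fix an admissible $x$. If $x\le\tfrac{n}{2}-1$, then $y-x-1+\kappa=(n-\kappa-1-x)-x-1+\kappa=n-2-2x\ge 0$, so Lemma~\ref{fil1} gives $\overline{\mathcal{K}}_{\kappa}[x,y]<\overline{\mathcal{K}}_{\kappa}[x+1,y-1]$; iterating (each intermediate ``$y$'' stays $\ge 2$ precisely because $n\ge 2\kappa+4$) produces a strictly increasing chain ending at $\overline{\mathcal{K}}_{\kappa}\!\left[\tfrac{n}{2},\tfrac{n-2\kappa-2}{2}\right]$. If instead $x\ge\tfrac{n}{2}$, write $x=\tfrac{n}{2}+c$ with $c\ge 0$; the constraint $y=\tfrac{n-2\kappa-2}{2}-c\ge 1$ forces $c\le\tfrac{n-2\kappa-6}{2}$ all along the descent, so Lemma~\ref{fil2} applies repeatedly and yields a strictly decreasing chain $\overline{\mathcal{K}}_{\kappa}\!\left[\tfrac{n}{2},\tfrac{n-2\kappa-2}{2}\right]>\overline{\mathcal{K}}_{\kappa}\!\left[\tfrac{n}{2}+1,\cdot\right]>\dots>\overline{\mathcal{K}}_{\kappa}[x,y]$; note that whenever this branch is nonempty one has $n\ge 2\kappa+6\ge 8$, so the $n\ge 8$ hypothesis of Lemma~\ref{fil2} comes for free. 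Combining the two cases gives $\overline{\mathcal{K}}_{\kappa}[x,y]\le\overline{\mathcal{K}}_{\kappa}\!\left[\tfrac{n}{2},\tfrac{n-2\kappa-2}{2}\right]$ for every admissible $x$, and since every link of both chains is strict, equality forces $x=\tfrac{n}{2}$.

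\smallskip\noindent\textbf{Odd case (part (ii)).} This is identical in structure: Lemma~\ref{fil1} is valid exactly when $n-2-2x\ge 0$, i.e.\ $x\le\tfrac{n-3}{2}=\tfrac{n-1}{2}-1$, and thus climbs $x$ up to $\tfrac{n-1}{2}$; Lemma~\ref{fil3} then descends any larger $x$ back to $\tfrac{n-1}{2}$, its parameter range $0\le c\le\tfrac{n-2\kappa-5}{2}$ being exactly what $y\ge 1$ allows, and its $n\ge 7$ hypothesis again being free on the nonempty descent branch (there $n\ge 2\kappa+5\ge 7$). Hence $\overline{\mathcal{K}}_{\kappa}[x,y]\le\overline{\mathcal{K}}_{\kappa}\!\left[\tfrac{n-1}{2},\tfrac{n-2\kappa-1}{2}\right]$ with equality only at $x=\tfrac{n-1}{2}$.

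\smallskip\noindent\textbf{Main obstacle.} I do not expect a genuinely hard step here: the substantive analytic work — in particular the polynomial positivity certificate inside the proof of Lemma~\ref{fil2} — is already done, and the theorem is pure assembly. The one point that must be handled with care is that the parameter windows of the three lemmas dovetail exactly, so that Lemma~\ref{fil1} covers every $x$ strictly below the midpoint (it does, precisely because $n-2-2x\ge 0$ there), Lemmas~\ref{fil2}/\ref{fil3} cover every $x$ at or above it (it does, precisely because $y\ge 1$ caps $c$ at the stated bound), and the two chains overlap in exactly the single graph $\overline{\mathcal{K}}_{\kappa}[\lfloor n/2\rfloor,\cdot]$; it is this unique overlap, together with the strictness of every link, that yields the ``if and only if.'' If one insisted on the statement below the feasibility threshold, the finitely many remaining pairs $(n,\kappa)$ would need to be checked by hand, but there the family $\overline{\mathcal{K}}_{\kappa}[x,y]$ is either empty or a single point, so nothing new arises.
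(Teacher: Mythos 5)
Your proposal is correct and follows essentially the same route as the paper: an increasing chain via Lemma~\ref{fil1} up to $x=\lfloor n/2\rfloor$ and a decreasing chain via Lemma~\ref{fil2} (resp.\ Lemma~\ref{fil3}) beyond it, with strictness of every link giving the equality case. Your explicit verification that the parameter windows of the lemmas dovetail is a welcome addition the paper leaves implicit.
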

\begin{proof}
By Lemmas \ref{fil1} and \ref{fil2},
\begin{align}\label{fff1}\mathcal{K}_{\kappa}[\kappa, n-2\kappa-1]< \mathcal{K}_{\kappa}[\kappa+1, n-2\kappa-2]<\cdots< \mathcal{K}_{\kappa}\left[\dfrac{n}{2},\dfrac{n-2\kappa-2}{2}\right]\end{align}
and 
\begin{align}\label{f222}
\mathcal{K}_{\kappa}\left[\dfrac{n}{2},\dfrac{n-2\kappa-2}{2}\right]> \mathcal{K}_{\kappa}\left[\dfrac{n}{2}+1,\dfrac{n-2\kappa-2}{2}-1\right]>\cdots>\mathcal{K}_{\kappa}\left[n-\kappa-2, 1\right].	
\end{align}
Thus, from equations (\ref{fff1}) and (\ref{f222}), we get the desired result. Proof of (ii) follows from Lemmas  \ref{fil1} and \ref{fil3} in a similar manner to that of (i).
\end{proof}
\begin{remark}\label{empty}
Let $G$ be a graph with maximum $\mathcal{ABS}$ index in $\mathcal{BG}_{n.\kappa}$ and let $S$ be a minimum vertex cut set of $G$. If $G-S$ is an empty graph, then $S$ is a partition set of $G$ and $G\cong K_{\kappa,n-\kappa}$ (because addition of an edge increases $\mathcal{ABS}$ index).    	
\end{remark}
The main result of this section is presented below.
\begin{theorem}
Let $G\in \mathcal{BG}_{n,\kappa}$ with $n\ge 7$.  \begin{enumerate}[(i)]
\item For $\kappa\in\left\{\dfrac{n-2}{2},\dfrac{n-1}{2}, \dfrac{n}{2}\right\}$, $\mathcal{ABS}(G)\le \kappa(n-\kappa)\sqrt{1-\dfrac{2}{n}}$. Equality holds if and only if $G\cong K_{\kappa,n-\kappa}$.
\item For $n$ even and $1\le k\le \dfrac{n-4}{2}$,  $$\mathcal{ABS}(G)\le \dfrac{n(n-2k-2)}{4}\sqrt{1-\dfrac{2}{n-1}}+\dfrac{nk}{2}\sqrt{1-\dfrac{2}{n}}+k\sqrt{1-\dfrac{4}{2k+n+2}}.$$ Equality holds if and only if $G\cong \overline{\mathcal{K}}_{\kappa}\left[\dfrac{n}{2},\dfrac{n-2k-2}{2}\right]$.
\item For  $n$ odd and $1\le k\le \dfrac{n-3}{2}$, $$\mathcal{ABS}(G)\le \dfrac{(n-1)(n-2k-1)}{4}\sqrt{1-\dfrac{2}{n-1}}+\dfrac{(n-1)k}{2}\sqrt{1-\dfrac{2}{n}}+k\sqrt{1-\dfrac{4}{2k+n+1}}.$$ Equality holds if and only if $G\cong \overline{\mathcal{K}}_{\kappa}\left[\dfrac{n-1}{2},\dfrac{n-2k-1}{2}\right]$.
\end{enumerate}
\end{theorem}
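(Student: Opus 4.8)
The plan is to run the dichotomy already prepared by the structure theorems. Let $G$ achieve the maximum $\mathcal{ABS}$ index in $\mathcal{BG}_{n,\kappa}$ and let $S$ be a minimum vertex cut set of $G$. By Theorem~\ref{bivertexc}, exactly one of two situations holds. Either $G-S$ has no non-trivial component, in which case Remark~\ref{empty} forces $G\cong K_{\kappa,n-\kappa}$ and $\mathcal{ABS}(G)=\kappa(n-\kappa)\sqrt{1-2/n}$ (every edge joins a vertex of degree $\kappa$ to one of degree $n-\kappa$). Or $G-S$ has a non-trivial component, in which case Theorem~\ref{ffffft1} gives $G\cong\overline{\mathcal{K}}_{\kappa}[x,y]$ with $x,y\ge1$ and $x+y+\kappa+1=n$; a short check shows $\overline{\mathcal{K}}_{\kappa}[x,y]$ has vertex connectivity $\min\{x,\kappa\}$, so membership in $\mathcal{BG}_{n,\kappa}$ forces $x\ge\kappa$, and hence this second situation can occur only when $\kappa\le x\le n-\kappa-2$, i.e.\ only when $\kappa\le(n-2)/2$. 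In particular, for $\kappa\in\{(n-1)/2,\,n/2\}$ the first situation is the only one available.

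The bridge between the two candidates is the identity $K_{\kappa,n-\kappa}\cong\overline{\mathcal{K}}_{\kappa}[\,n-\kappa-1,\,0\,]$, which is consistent with the closed expression for $\overline{\mathcal{K}}_{\kappa}[x,y]$ appearing in Lemma~\ref{fil1} (that expression evaluates to $\kappa(n-\kappa)\sqrt{1-2/n}$ when $y=0$). A one-line computation with that expression then gives
\[
\overline{\mathcal{K}}_{\kappa}[\,n-\kappa-2,\,1\,]-\overline{\mathcal{K}}_{\kappa}[\,n-\kappa-1,\,0\,]
=(n-2)\sqrt{1-\dfrac{2}{n-1}}-2\kappa\sqrt{1-\dfrac{2}{n}} .
\]
Since $\sqrt{1-2/n}>\sqrt{1-2/(n-1)}$ and, after squaring and cross-multiplying, the inequality $(n-2)\sqrt{(n-3)/(n-1)}>(n-3)\sqrt{(n-2)/n}$ reduces to $2n>3$, this difference is strictly positive when $2\kappa\le n-3$ and strictly negative when $2\kappa\ge n-2$.

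Now I assemble the three parts. For part (i): if $\kappa\in\{(n-1)/2,\,n/2\}$ the only candidate is $K_{\kappa,n-\kappa}$; if $\kappa=(n-2)/2$ (so $n$ is even) the unique $\overline{\mathcal{K}}_{\kappa}[x,y]$ candidate is $\overline{\mathcal{K}}_{\kappa}[(n-2)/2,\,1]=\overline{\mathcal{K}}_{\kappa}[\,n-\kappa-2,\,1\,]$, which by the display (here $2\kappa=n-2$) has strictly smaller $\mathcal{ABS}$ index than $K_{\kappa,n-\kappa}=\overline{\mathcal{K}}_{\kappa}[\,n-\kappa-1,\,0\,]$; either way $G\cong K_{\kappa,n-\kappa}$, giving the stated value and the equality characterization. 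For parts (ii) and (iii) the hypotheses amount to $2\kappa\le n-3$, so $K_{\kappa,n-\kappa}$ is strictly beaten by the admissible Case~B graph $\overline{\mathcal{K}}_{\kappa}[\,n-\kappa-2,\,1\,]$; hence the extremal graph falls in Case~B, and Theorem~\ref{fffffft2} identifies it as $\overline{\mathcal{K}}_{\kappa}[\,n/2,\,(n-2k-2)/2\,]$ when $n$ is even and $\overline{\mathcal{K}}_{\kappa}[\,(n-1)/2,\,(n-2k-1)/2\,]$ when $n$ is odd. Substituting these $(x,y)$ into the expression for $\overline{\mathcal{K}}_{\kappa}[x,y]$ (using $n-y=(n+2k+2)/2$ in the even case and $n-y=(n+2k+1)/2$ in the odd case) produces the claimed closed forms, and the uniqueness clause of Theorem~\ref{fffffft2} yields the equality characterizations.

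Two housekeeping points and the expected obstacle. One should note that the ranges of $\kappa$ in (i)--(iii) are exhaustive: for a connected bipartite graph $\kappa\le\lfloor n/2\rfloor$, and $\{1,\dots,(n-4)/2\}\cup\{(n-2)/2,\,n/2\}$ (for $n$ even) and $\{1,\dots,(n-3)/2\}\cup\{(n-1)/2\}$ (for $n$ odd) exhaust this; and $n\ge7$ is exactly what is needed to invoke Theorem~\ref{fffffft2} (via Lemmas~\ref{fil2} and~\ref{fil3}). The substantive step is the comparison of $K_{\kappa,n-\kappa}$ with the best member of the family $\overline{\mathcal{K}}_{\kappa}[x,y]$; the delicate point is getting the threshold exactly right, namely that $\kappa=(n-2)/2$ lands on the $K_{\kappa,n-\kappa}$ side while every smaller admissible $\kappa$ lands on the other side. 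That is why it is cleanest to collapse the comparison into the single signed expression above and read off its sign, rather than to match the two bulky closed forms against each other directly.
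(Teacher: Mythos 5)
Your proposal is correct and follows essentially the same route as the paper: the same dichotomy on whether $G-S$ has a non-trivial component, the same reduction via Theorem~\ref{ffffft1}, Theorem~\ref{fffffft2} and Remark~\ref{empty}, and the same decisive comparison of $K_{\kappa,n-\kappa}$ with $\overline{\mathcal{K}}_{\kappa}[n-\kappa-2,1]$. Your packaging of that comparison as the single signed quantity $(n-2)\sqrt{1-\tfrac{2}{n-1}}-2\kappa\sqrt{1-\tfrac{2}{n}}$, whose sign flips exactly at $2\kappa=n-2$, is a slightly cleaner presentation of what the paper does case by case, but it is not a different argument.
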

\begin{proof}
Let $G\in \mathcal{BG}_{n,\kappa}$ with  maximum $\mathcal{ABS}$ index and let $S$ be a minimum vertex cut set of $G$. Let $\kappa\in\left\{\dfrac{n-1}{2}, \dfrac{n}{2}\right\}$. If $G-S$ has a non-trivial component, then by Theorem \ref{ffffft1},  $G\cong \overline{\mathcal{K}}_{\kappa}[x,y]$ with $x\ge 1$ and $y\ge \kappa$. So, $|G|=x+y+\kappa+1>n$, a contradiction. Hence $G-S$ is an empty graph, and so by Remark \ref{empty}, $G\cong \mathcal{K}_{\kappa,n-\kappa}$.\\[2mm]
Let $\kappa=\dfrac{n-2}{2}$. Then from Remark \ref{empty} and by Theorems \ref{ffffft1} and \ref{fffffft2}, we get, $G\cong K_{\dfrac{n-2}{2},\dfrac{n+2}{2}}$ or $\overline{\mathcal{K}}_{\kappa}\left[\dfrac{n-2}{2},1\right]$. Since $\mathcal{ABS}\left(K_{\dfrac{n-2}{2},\dfrac{n+2}{2}}\right)>\mathcal{ABS}\left(\overline{\mathcal{K}}_{\kappa}\left[\dfrac{n-2}{2},1\right]\right)$, $G\cong K_{\dfrac{n-2}{2},\dfrac{n+2}{2}}$.        
Assume that $1\le k\le \dfrac{n-3}{2}$. If $n$ is even, then from Remark \ref{empty} and by Theorems \ref{ffffft1} and \ref{fffffft2}, $G\cong K_{\kappa,n-\kappa}$  or $G\cong \overline{\mathcal{K}}_{\kappa}\left[\dfrac{n}{2},\dfrac{n-2k-2}{2}\right]$.
Since ${\small\mathcal{ABS}\left(\overline{\mathcal{K}}_{\kappa}\left[\dfrac{n}{2},\dfrac{n-2k-2}{2}\right]\right)}\\\ge 
\mathcal{ABS}\left(\overline{\mathcal{K}}_{\kappa}\left[n-k-2,1\right]\right)>\mathcal{ABS}\left(K_{\kappa,n-\kappa}\right)$, $G\cong \overline{\mathcal{K}}_{\kappa}\left[\dfrac{n}{2},\dfrac{n-2k-2}{2}\right]$. Otherwise $n$ is odd. Similar to the case $n$ is even, we get $G\cong \overline{\mathcal{K}}_{\kappa}\left[\dfrac{n-1}{2},\dfrac{n-2k-1}{2}\right]$.    
\end{proof}
\section*{Conclusion}
This work characterizes graphs with the maximum \(\mathcal{ABS}\) index in the following classes: (i) connected graphs with \(n\) vertices and \(p\) cut-vertices; (ii) connected graphs of order \(n\) with a vertex \(k\)-partiteness \(v_k(G) = r\); and (iii) connected bipartite graphs of order \(n\) with a fixed vertex connectivity \(\kappa\). This study provides solutions to some of the many open problems proposed in \cite{ali2024extremal}. Several open extremal problems related to this index require further investigation in future research.
\section*{Declarations}
\textbf{Availability of data and material}: Manuscript has no associated data.\\
\textbf{Competing interest}: The authors declare that they have no competing interest.\\
\textbf{Funding}: No funds received.\\
\textbf{Authors' contributions:} All authors have equally contributed.

\bibliography{name}
\bibliographystyle{abbrv}
\end{document}